\newtheorem{theorem}{Theorem}[section]
\newtheorem{proposition}[theorem]{Proposition}
\newtheorem{corollary}[theorem]{Corollary} 
\newtheorem{lemma}[theorem]{Lemma}
\newtheorem{remark}[theorem]{Remark}
\newtheorem{definition}[theorem]{Definition}
\newtheorem{example}[theorem]{Example}
\DeclareRobustCommand{\rchi}{{\mathpalette\irchi\relax}}
\newcommand{\irchi}[2]{\raisebox{\depth}{$#1\chi$}}
\begin{document}

   \title{Quandles with one non-trivial column}
   
   \author{Nicholas Cazet}
 
   \begin{abstract}
   
   The axioms of a quandle imply that the columns of its Cayley table are permutations. This paper studies quandles with exactly one non-trivially permuted column. Their automorphism groups, quandle polynomials, (symmetric) cohomology groups, and $Hom$ quandles are studied. The quiver and cocycle invariant of links using these quandles are shown to relate to linking number. 
   \end{abstract}

\maketitle

\section{Introduction}

There are 3 quandle of order 3, the trivial $T_3$, the dihedral $R_3$, and $P_3$.  The quandle $P_3$ fits into a family of quandles $P_n^\sigma$ defined in this paper having only one non-trivially permuted column in their Cayley tables. Several second and third homology groups of $P_n^\sigma$ are calculated since quandle cocycles define link invariants. In addition,  $Hom$ quandles between specific $P_n^\sigma$, the automorphism groups of $P_n^\sigma$, and the quandle polynomials of $P_n^\sigma$  are calculated.

Section \ref{sec:def} defines quandles, their algebraic invariants, and link invariants using quandles. Section \ref{sec:pnsigma} defines $P_n^\sigma$ and studies their algebraic invariants. Section \ref{sec:cohomology} computes several cohomology groups of $P_n^\sigma$. Section \ref{sec:quiver} analyzes the quandle quiver and cocycle invariant of links using $P_n^\sigma$.

\section{Definitions}
\label{sec:def}

\begin{definition}
A {\it quandle}  is a set $X$ with a binary operation $(x,y)\mapsto x*y$ such that 

\begin{enumerate}

\item[(i)] for any $x\in X$,  $x*x=x$,
\item[(ii)] for any $x,y\in X$, there exists a unique $z\in X$ such that $z*y=x$, and 
\item[(iii)] for any $x,y,z\in X$,  $(x*y)*z=(x*z)*{(y*z)}$.

\end{enumerate}

\label{def:quandle}
\end{definition}

\noindent For any $x,y\in X$, let $x \ \bar*\ y$ be the unique $z\in X$ such that $z*y=x$.

\begin{definition}[\cite{symquandle}]

An involution $\rho$ on a quandle $X$ is {\it good} if $\rho(x*y)=\rho(x)*y$ and $x*\rho(y)=x\ \bar* \ y$ for all $x,y\in X$. Such a pair $(X,\rho)$ is a symmetric quandle.

\end{definition}

\begin{example}[\cite{symquandle}]

Conjugation on a group $g*h=h^{-1}gh$ defines a quandle. The inverse map $\rho(g)=g^{-1}$ is a good involution.

\end{example}

\begin{example}
The trivial quandle $T_{n}=\{0,1,\dots, n-1\}$ of order $n$ is defined by $x*y=x$ for all $x,y\in T_{n}$. Any involution on $T_{n}$ is good.
\end{example}

\begin{example}

 The operation $x*y \equiv 2y-x \mod n$ on $\mathbb{Z}_n$ gives the dihedral quandle $R_n$. Kamada and Oshiro classified all good involutions on $R_n$ in \cite{sym}.

\end{example}

Finite quandles have Cayley tables. The table's entries $(\alpha_{ij})$ are given by $\alpha_{ij}=i*j$.  The axioms of a quandle imply that $\alpha_{ii}=i$ and that the columns  of the table represent permutations.  Table \ref{tab:3} gives the Cayley tables of the order 3 quandles.

\begin{table}[ht]
\centering
\begin{tabular}{c|c c c   }

& 0&1&2  \\\hline 
0& 0 & 0 &0\\ 
1&1 & 1&1\\
2&2 & 2 &2\\ 

\end{tabular}
\hspace{4mm}
\begin{tabular}{c|c c c   }

& 0&1&2  \\\hline 
 0&0 & 2 &1\\ 
1 &2& 1 &0\\
2& 1& 0 &2\\

\end{tabular}
\hspace{4mm}
\begin{tabular}{c|c c c   }

& 0&1&2  \\\hline 
0& 0 & 0 &0\\ 
1 &2& 1&1\\
2 &1& 2 &2\\ 

\end{tabular}
\vspace{4mm}
\caption{Cayley tables of $T_3$, $R_3$, and $P_3$.}
\label{tab:3}
\end{table}

\subsection{Automorphism Groups} A {\it quandle homomorphism} satisfies $f(x*y)=f(x)*f(y)$. Under the operation of composition, the bijective endomorphisms on a quandle $X$ constitute its automorphism group $Aut(X)$. For every $y\in X$, let $S_y(x)=x*y$. The second axiom of a quandle implies that $S_y$ is a bijection for any $y\in X$. The group generated by the permutations $S_y$ is the quandle's inner automorphism group $Inn(X)$. Recent articles studying these groups include \cites{bardakov2017automorphism,bardakov2019automorphism,hou2011automorphism,elhamdadi2012automorphism}.

\subsection{Quandle Polynomials}

Nelson introduced a two-variable polynomial invariant of finite quandles in \cite{nelson2008polynomial}.

\begin{definition}

Let $X$ be a finite quandle. For any element $x\in X$, let \[ c(x)=|\{y\in X \ : \ y*x=y\}|, \ \text{and} \ r(x)=|\{y\in X: x*y=x\}|.\] The {\it quandle polynomial of $X$} is $qp_X(s,t)=\sum_{x\in X} s^{r(x)}t^{c(x)}.$ 
\end{definition}

\noindent He later generalized the polynomial in \cite{nelson2011generalized}.

\subsection{$Hom$ Quandles}

Crans and Nelson showed that when $A$ is an abelian quandle the set of quandle homomorphism $Hom(X,A)$ from a quandle $X$ to $A$ has a natural quandle structure \cite{crans2014hom}.  

\begin{definition}

A quandle $A$ is abelian or medial if for all $x,y,z,w\in Q$, \[ (x*y)*(z*w)=(x*z)*(y*w).\]

\end{definition}

\begin{theorem}[Crans, Nelson '14 \cite{crans2014hom}]

Let $X$ and $A$ be quandles. If $A$ is abelian, then $Hom(X,A)$ is an abelian quandle under the pointwise operation $(f*g)(x)=f(x)*g(x)$.  Let $X$ be a finitely generated quandle and $A$ an abelian quandle. Then $Hom(X,A)$ is isomorphic to a subquandle of $A^c$ where $c$ is the minimal number of generators of $X$. Identify an element $f\in Hom(X,A)$ with the $c$-tuple  \[ (f(x_1),\dots,f(x_c)).\] The pointwise operation on $Hom(X,A)$ agrees with the componentwise operation on the product $A^c$.
\end{theorem}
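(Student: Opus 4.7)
The plan is to verify the three claims in order: first that $Hom(X,A)$ is closed under the pointwise operation and satisfies the quandle axioms, then that the resulting quandle is abelian, and finally that the evaluation map at the generators embeds $Hom(X,A)$ into $A^c$ as a subquandle.

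For the closure claim, given $f,g\in Hom(X,A)$ I would define $(f*g)(x)=f(x)*g(x)$ and check that $f*g$ is itself a quandle homomorphism $X\to A$. Expanding both sides of $(f*g)(x*y)=(f*g)(x)*(f*g)(y)$ gives
\[
(f(x)*f(y))*(g(x)*g(y))\quad\text{versus}\quad (f(x)*g(x))*(f(y)*g(y)),
\]
and these coincide precisely by the medial identity in $A$. This is the only place abelianness is used for closure, and it is the conceptual heart of the statement; everything else is formal. The three quandle axioms for the operation on $Hom(X,A)$ then follow pointwise from the axioms in $A$: idempotence $f*f=f$ is immediate; for the unique right-inverse, I would define $h(x):=f(x)\,\bar*\,g(x)$ and check that $h$ is a homomorphism by the same medial computation applied to $\bar *$ (which, since $A$ is a quandle, is also medially compatible with $*$); self-distributivity is a pointwise consequence of self-distributivity in $A$.

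Next, to see that $Hom(X,A)$ is itself abelian, I would compute $((f*g)*(h*k))(x)$ and $((f*h)*(g*k))(x)$ at an arbitrary $x\in X$; both evaluate to expressions in $A$ that agree by mediality in $A$. So abelianness is inherited pointwise.

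For the embedding, let $x_1,\dots,x_c$ be a minimal generating set of $X$ and define the evaluation map $\Phi\colon Hom(X,A)\to A^c$ by $\Phi(f)=(f(x_1),\dots,f(x_c))$. Since every element of $X$ is obtained from the $x_i$ by iterated $*$ and $\bar*$, a homomorphism is determined by its values on the generators, giving injectivity of $\Phi$. That $\Phi$ intertwines the pointwise operation with the componentwise operation on $A^c$ is tautological from the definitions, so $\Phi$ is a quandle homomorphism onto its image, realizing $Hom(X,A)$ as a subquandle of $A^c$. The main obstacle in the whole argument is really just the first step, namely the verification that $f*g$ lands back in $Hom(X,A)$; once mediality is invoked there, every remaining step is a routine pointwise check.
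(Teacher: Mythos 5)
The paper does not actually prove this statement: it is quoted as background from Crans--Nelson \cite{crans2014hom}, so there is no in-paper proof to compare against. Judged on its own, your argument is correct and is essentially the standard one: mediality of $A$ is precisely what makes $f*g$ a homomorphism again, and the quandle axioms, the mediality of $Hom(X,A)$, and the identification with a subquandle of $A^c$ via evaluation at a generating set are all pointwise or formal consequences. The one step you assert rather than verify is that $h(x)=f(x)\ \bar*\ g(x)$ is a homomorphism; the phrase that $\bar*$ is ``medially compatible'' hides a short computation. It does follow from mediality: writing $a=u*c$ and $b=v*d$, the medial identity gives $(a*b)=(u*c)*(v*d)=(u*v)*(c*d)$, hence $(a*b)\ \bar*\ (c*d)=(a\ \bar*\ c)*(b\ \bar*\ d)$, which applied to $a=f(x)$, $b=f(y)$, $c=g(x)$, $d=g(y)$ shows $h(x*y)=h(x)*h(y)$. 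Adding that one line would close the only gap; everything else, including the injectivity of evaluation (noting that quandle homomorphisms automatically preserve $\bar*$), is sound.
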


\noindent For a finite quandle $X=\{0,1,\dots,n-1\}$, it is sufficient to classify the componentwise quandle structure of the $n$-tuples $(f(0),f(1),\dots,f(n-1))$ to determine the quandle structure of $Hom(X,A)$.  $Hom$ and abelian quandles were recently studied in \cites{bonatto2019structure,jedlivcka2015structure,jedlivcka2018subdirectly,lebed2021abelian}.

\subsection{Quandle Coloring Number} Let $X$ be a finite quandle. Assigning $X$ elements to the arcs of an oriented link diagram so that each crossing satisfies the quandle coloring condition of Figure \ref{fig:colorlink} is called a {\it quandle coloring}, specifically an $X${\it-coloring}. The axioms of a quandle imply that an $X$-coloring uniquely extends to another after any of the 3 Reidemeister moves, see \cites{elhamdadi2015quandles,CKS,kamada2017surface}. For a finite quandle $X$, the {\it quandle coloring number} $col_X(L)$ is a link invariant and equal to the number of $X$-colorings of any diagram of $L$.

\begin{figure}[ht]

\begin{overpic}[unit=.434mm,scale=.8]{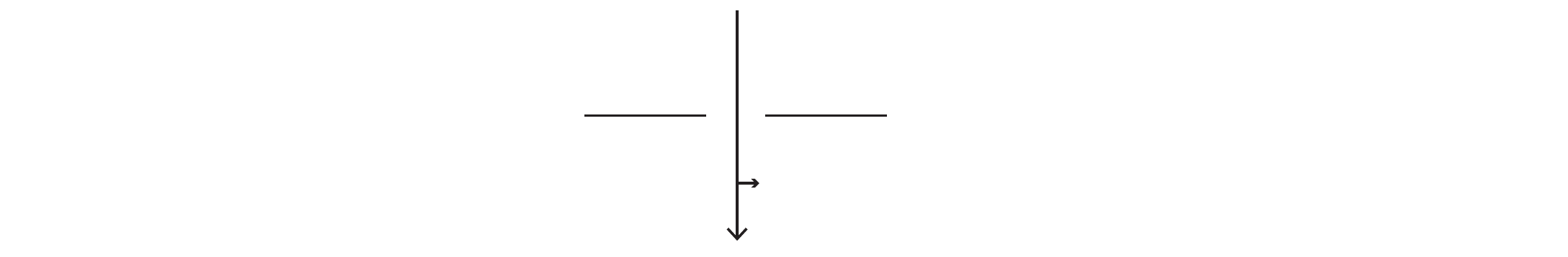}\put(161,39){$x$}
\put(180,50){$y$}\put(202,39){$x*y$}
\end{overpic}

\caption{Quandle coloring condition.}\label{fig:colorlink}
\end{figure}

\subsection{Quandle Quivers}

 A quandle endomorphism $f\in End(X)$ acts on an $X$-coloring of a link diagram by replacing the arcs' colors by their images under $f$. Cho and Nelson introduced the {\it quandle quiver invariant} of links enhancing and categorifying the coloring number \cite{cho2019quandle}.

\begin{definition}

Let $X$ be a finite quandle and $S\subset End(X)$. The quandle quiver $\mathcal{Q}_{X}^S(L)$ of a link $L$ is a directed graph with vertices representing the $X$-colorings of $L$ and directed edges representing the action of $S$ on its colorings. 

\end{definition}

\noindent Two quivers are isomorphic if there exists a bijection $f$ between their vertices that extends to a bijection $F$ between their edges, $F(v_1v_2)=f(v_1)f(v_2)$ for every edge $v_1v_2$.

  \begin{theorem}
  
  The quandle quiver $\mathcal{Q}_{T_n}^S(L)$ is determined by the number of components in $L$.
  \end{theorem}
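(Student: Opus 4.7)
The plan is to show that for the trivial quandle $T_n$, a coloring of a link diagram is the same data as a labeling of its components, and that every endomorphism acts componentwise on such labelings. Once this is established, both the vertex set and the edge set of the quiver are canonically encoded by $(n, S, c)$ alone, so the quiver depends on $L$ only through its component count $c$.

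First I would analyze the coloring condition. Since $x*y=x$ in $T_n$, the crossing condition at every crossing reduces to the requirement that the two under-arcs receive the same color (with no constraint from the over-arc). Walking around any component of $L$, consecutive under-arcs are forced to share a color, so any $T_n$-coloring is constant on each component. Conversely, any assignment of an element of $T_n$ to each component extends to a valid $T_n$-coloring. After fixing an ordering $K_1,\dots,K_c$ of the components, this gives a canonical bijection
\[
\Phi_L\colon \operatorname{Col}_{T_n}(L)\xrightarrow{\ \cong\ } T_n^{\,c},\qquad \text{coloring}\ \kappa\ \longmapsto\ (\kappa(K_1),\dots,\kappa(K_c)).
\]

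Next I would analyze the action of $\operatorname{End}(T_n)$. Every map $f\colon T_n\to T_n$ is a quandle endomorphism (since $f(x*y)=f(x)=f(x)*f(y)$), and its action on a coloring simply postcomposes every arc's color with $f$. Because colorings are constant on components, under $\Phi_L$ this action becomes the componentwise map $(a_1,\dots,a_c)\mapsto(f(a_1),\dots,f(a_c))$ on $T_n^{\,c}$, which does not depend on $L$ at all.

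To conclude, let $L_1$ and $L_2$ be two links with the same number $c$ of components, with component orderings chosen. The composition $\Phi_{L_2}^{-1}\circ\Phi_{L_1}$ is a bijection between the vertex sets of $\mathcal{Q}_{T_n}^S(L_1)$ and $\mathcal{Q}_{T_n}^S(L_2)$; because each $f\in S$ acts as the same componentwise map on $T_n^{\,c}$ in either case, this bijection carries the $f$-edges of one quiver to the $f$-edges of the other, giving the required quiver isomorphism. The only place anything could go wrong is the identification of $\operatorname{Col}_{T_n}(L)$ with $T_n^{\,c}$, since it relies on the connectedness of each component and the fact that the quandle operation in $T_n$ is trivial on both sides of every crossing; once that is in hand, the rest is a direct translation.
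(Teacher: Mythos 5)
Your proposal is correct and follows essentially the same route as the paper's proof: identify $T_n$-colorings with independent choices of a color per component, observe that the $\operatorname{End}(T_n)$-action is componentwise and hence independent of $L$, and transport edges through the induced bijection of coloring sets. You simply spell out the intermediate steps (constancy of colors on components, the explicit identification with $T_n^{\,c}$) in more detail than the paper does.
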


 \begin{proof}  Let $L$ be a link with $k$ components. A $T_n$-coloring of $L$ is $k$ independent choices of colors, one for each component.  Let $\varphi:\pi_0(L)\to\pi_0(L')$ be a bijection between the components of $L$ and the components of another link $L'$ with $k$ components.   The bijection $\varphi$ extends to a bijection $\Phi:\text{Col}_{T_n}(L)\to \text{Col}_{T_n}(L')$.  Let $f\in$ End$(T_n)$. Sending the edge connecting the colorings $c$ and $f\circ c$ to the edge connecting $\Phi(c)$ and $\Phi(f\circ c)$ defines a quiver isomorphism.
 \end{proof}

 \begin{theorem}[Taniguchi '20 \cite{taniguchi2021quandle}]
 
 Let $L$ and $L'$ be oriented link diagrams and $p$ be a prime. For any $S\subseteq End(R_p)$, the quandle quivers $Q_{R_p}^S(L)$ and  $Q_{R_p}^S(L')$ are isomorphic if and only if $col_{R_p}(L)=col_{R_p}(L')$.
 \label{thm:taniguchi}
 
 \end{theorem}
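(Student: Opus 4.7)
The plan is to prove the two directions separately, with the forward direction being immediate and the reverse direction relying on the linear structure of $R_p$-colorings together with an explicit description of $\mathrm{End}(R_p)$.

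For the forward direction, an isomorphism of quivers induces a bijection on vertex sets, and since the vertex set of $Q_{R_p}^S(L)$ is exactly $\mathrm{Col}_{R_p}(L)$, the coloring numbers must agree.

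For the reverse direction I would first identify $\mathrm{End}(R_p)$ explicitly. A map $f:\mathbb{Z}_p\to\mathbb{Z}_p$ commutes with the operation $x*y=2y-x$ precisely when it is affine, i.e., $f(x)=ax+b$ for some $a,b\in\mathbb{Z}_p$; a quick computation confirms $f(2y-x)=2f(y)-f(x)$ in this case and rules out non-affine endomorphisms when $p$ is prime. Next I would recall that $\mathrm{Col}_{R_p}(L)$ carries the structure of a $\mathbb{Z}_p$-vector space: the coloring conditions $c(\gamma)=2c(\beta)-c(\alpha)$ at each crossing are $\mathbb{Z}_p$-linear in the arc labels, so $\mathrm{Col}_{R_p}(L)$ is a linear subspace of $\mathbb{Z}_p^{n}$, and in particular isomorphic to $\mathbb{Z}_p^{k}$ where $p^{k}=\mathrm{col}_{R_p}(L)$. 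Under this structure, the constant coloring $\mathbf{1}_L$ (taking the value $1$ on every arc) is a distinguished nonzero vector, and the action of $f(x)=ax+b\in\mathrm{End}(R_p)$ on a coloring $c$ is
\[
(f\circ c)(\alpha)=ac(\alpha)+b,\qquad\text{i.e.,}\qquad f\cdot c = a\,c+b\,\mathbf{1}_L.
\]

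Assuming $\mathrm{col}_{R_p}(L)=\mathrm{col}_{R_p}(L')=p^k$, both coloring spaces are $k$-dimensional $\mathbb{Z}_p$-vector spaces with a distinguished nonzero vector $\mathbf{1}_L$, $\mathbf{1}_{L'}$. Extend $\mathbf{1}_L$ to a basis of $\mathrm{Col}_{R_p}(L)$ and $\mathbf{1}_{L'}$ to a basis of $\mathrm{Col}_{R_p}(L')$, and let $\Phi:\mathrm{Col}_{R_p}(L)\to\mathrm{Col}_{R_p}(L')$ be the unique $\mathbb{Z}_p$-linear isomorphism sending the first basis to the second. In particular $\Phi(\mathbf{1}_L)=\mathbf{1}_{L'}$. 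For any $f(x)=ax+b$ in $S$ and any $c\in\mathrm{Col}_{R_p}(L)$, linearity gives
\[
\Phi(f\cdot c)=\Phi(a c+b\mathbf{1}_L)=a\Phi(c)+b\mathbf{1}_{L'}=f\cdot\Phi(c),
\]
so $\Phi$ is a vertex bijection that intertwines the $S$-action and therefore extends to a quiver isomorphism by sending the edge $c\to f\circ c$ to $\Phi(c)\to f\circ\Phi(c)$.

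The only delicate point is the algebraic description of $\mathrm{End}(R_p)$, which is where primality is used; once affineness is in hand, the proof is driven entirely by linear algebra, and the observation that endomorphism actions are affine operations built from scalar multiplication and translation by the constant coloring. This reduces the seemingly intricate quiver equivalence problem to the existence of a linear bijection between two equidimensional vector spaces sending one distinguished vector to another, which is automatic.
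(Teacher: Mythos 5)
The paper states this result as a citation to Taniguchi and gives no proof of its own, so there is nothing internal to compare against; judged on its own terms, your argument is correct and is essentially the standard one. Both directions check out: the vertex set of the quiver is $\mathrm{Col}_{R_p}(L)$, every endomorphism of $R_p$ is affine (for odd $p$ one reduces $f(2y-x)=2f(y)-f(x)$ to additivity of $f-f(0)$ using invertibility of $2$; for $p=2$ all four self-maps are affine anyway), the coloring space is a $\mathbb{Z}_p$-subspace of $\mathbb{Z}_p^{\#\mathrm{arcs}}$ containing the nonzero constant coloring, and a linear bijection matching the constant colorings intertwines the affine $S$-action, hence induces the edge bijection required by the paper's definition of quiver isomorphism. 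One small correction of emphasis: primality is not really what forces affineness (that works for any odd modulus); it is what guarantees $\mathrm{Col}_{R_p}(L)$ is a \emph{free} module, i.e.\ a vector space determined up to isomorphism by its cardinality --- for composite $n$ two coloring modules of equal size need not be isomorphic, which is where the argument would break.
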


\noindent Recent studies of quandle quivers include \cites{kim2021quandle,zhou2023quandle,p2}.

\subsection{Quandle Cohomology}

Quandle cohomology was introduced by Carter, Jelsovsky, Kamada, Langford, and Saito to study classical oriented links and oriented surface-links \cite{cocycle}. Symmetric quandle cohomology was later introduced by Kamada to study classical unoriented links and unoriented surface-links \cite{symquandle}.

Let $X$ be a quandle. Let $C_n(X)$ be the free abelian group generated by $X^n$ when $n\geq 1$ and $C_n(X)=0$ when $n<1$. Define the boundary homomorphism $\partial_n : C_n(X)\to C_{n-1}(X)$ by \[ \partial_n(x_1,\dots,x_n)=\sum_{i=1}^n(-1)^i \{(x_1,\dots,\hat x_i,\dots, x_n)-(x_1*x_i,\dots, x_{i-1}*x_i,\hat x_i,x_{i+1},\dots,x_n)\} \]

\noindent for $n>1$ and $\partial_n=0$ for $n\leq 1$. Then $C_*(X)=\{C_n(X),\partial_n\}$ is a chain complex \cites{cocycle,CKS,Carter1999ComputationsOQ,kamada2017surface}.  
For $n\geq 2$, let $D_n^Q(X)$ be the subgroup of $C_n(X)$ generated by the elements \[ \bigcup_{i=1}^{n-1}\{(x_1,\dots,x_n)\in X^n \ | \ x_i=x_{i+1}\},\] 

\noindent and for $n<2$, let $D_n^Q(X)=0$. For a good involution $\rho:X\to X$, let $D_n^\rho(X)$ be the subgroup of $C_n(X)$ generated by the elements 
\[ \bigcup_{i=1}^{n}\left\{(x_1,\dots,x_n)+(x_1*x_i,\dots,x_{i-1}*x_i,\rho(x_i),x_{i+1},\dots,x_n) \ | \ x_1,\dots,x_n \in X^n \right\},\] 

\noindent for $n\geq 1$ and $D_n^\rho(X)=0$ for $n<1$.  Then $D_*^Q(X)=\{ D_n^Q(X), \partial_n\}$ and $D_*^\rho(X)=\{ D_n^\rho(X), \partial_n\}$ are subcomplexes of $C_*(X)$ \cite{sym}. Let $C_*^Q(X)=C_*(X)/D_*^Q(X)$ and $C_*^{Q,\rho}(X)=C_*(X)/(D_*^Q(X)+D_*^\rho(X))$

For an abelian group $A$, the cohomology groups of the complexes \[ C_*^Q(X;A)=C_*^Q(X)\otimes A \ \text{and} \  C_*^{Q,\rho}(X;A)=C_*^{Q,\rho} \otimes A\] are the {\it quandle cohomology groups} $H_Q^*(X;A)$ and the {\it symmetric quandle cohomology groups} $H_{Q,\rho}^*(X;A)$ of $X$.

\begin{lemma}[\cites{cocycle,CKS,Carter1999ComputationsOQ,kamada2017surface}] Let $X$ be a quandle and $A$ an abelian group. A homomorphism $f: C_2(X) \to A$ is a {\it quandle 2-cocycle} of $X$ if the following conditions are satisfied: 

\begin{enumerate}
\item[(i)] For any $x_0,x_1,x_2\in X,$ \begin{align*}
&f(x_0,x_1)+f(x_0*{x_1},x_2) \\ 
&-f(x_0,x_2)-f(x_0*{x_2},x_1*{x_2})=0
\end{align*}

\item[(ii)] For all $x\in X$, $f(x,x)=0$.

\end{enumerate}
%
%
%

\label{lem:cocycle}
\end{lemma}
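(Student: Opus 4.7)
The plan is to unpack directly the definition of a 2-cocycle in the quotient complex $C_*^Q(X;A)$. A homomorphism $f:C_2(X)\to A$ represents a 2-cocycle of $X$ with coefficients in $A$ precisely when (a) $f$ vanishes on the degenerate subgroup $D_2^Q(X)$, so that $f$ descends to $C_2(X)/D_2^Q(X)$, and (b) the coboundary $\delta f = f\circ\partial_3$ vanishes on all of $C_3(X)$. I would verify that conditions (ii) and (i) of the lemma correspond exactly to (a) and (b) respectively.

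For (ii)$\Leftrightarrow$(a): the index in the definition of $D_n^Q(X)$ runs over $1\le i\le n-1$, so for $n=2$ only $i=1$ contributes. Thus $D_2^Q(X)$ is generated by the diagonal pairs $(x,x)$ with $x\in X$, and $f|_{D_2^Q(X)}=0$ is precisely the statement $f(x,x)=0$ for all $x\in X$.

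For (i)$\Leftrightarrow$(b): I would substitute a generator $(x_0,x_1,x_2)$ into the given formula for $\partial_n$ with $n=3$. The $i=1$ summand vanishes, since there are no entries preceding $x_0$ to act on with $*x_0$, so the two terms $(\hat{x}_0,x_1,x_2)$ cancel. The $i=2$ and $i=3$ contributions then combine to
\[ \partial_3(x_0,x_1,x_2) = (x_0,x_2) - (x_0*x_1,x_2) - (x_0,x_1) + (x_0*x_2,x_1*x_2). \]
Applying $f$ and setting the result to zero rearranges to condition (i). Since such tuples span $C_3(X)$, vanishing of $f\circ\partial_3$ on each generator is equivalent to vanishing on all of $C_3(X)$.

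The only real obstacle is careful bookkeeping with the alternating signs and the hatted (deleted) entries — the expression $(x_1*x_i,\dots,x_{i-1}*x_i,\hat{x}_i,x_{i+1},\dots,x_n)$ invites index slips — so I would write out the $i=2$ and $i=3$ contributions term by term rather than trust memory. Otherwise the argument is a direct unwinding of the definitions of $\partial_3$ and $D_2^Q(X)$.
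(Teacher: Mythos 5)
Your proposal is correct: the computation of $\partial_3(x_0,x_1,x_2)$ (with the $i=1$ terms cancelling) and the identification of $D_2^Q(X)$ with the diagonal pairs are both accurate, and together they yield exactly conditions (i) and (ii). The paper itself gives no proof of this lemma, citing it from the references, and your direct unwinding of $\partial_3$ and $D_2^Q(X)$ is precisely the standard derivation found there; the only caveat is that one must read the paper's cochain complex as $\mathrm{Hom}(C_*^Q(X),A)$ with $\delta f=f\circ\partial$, which is the convention you correctly adopt.
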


\subsection{Quandle Cocycle Invariant}

In their seminal article defining quandle cohomology \cite{cocycle}, Carter et al. defined the quandle cocycle invariant and used it to show that the 2-twist-spun trefoil is not isotopic to its orientation reverse. Notable use of the invariant also includes \cites{satoh1,satohcocycle,hat,satoh6,kamadakim,ribbon}. Using a quandle 2-cocycle, this invariant can be defined for classical links. This paper will only use the quandle cocycle invariant formulated for classical links, i.e. using a quandle 2-cocycle.

For a finite quandle $X$, let $\theta$ be a quandle 2-cocycle with coefficients in an abelian group $A$ written multiplicatively. Let $L$ be an $X$-colored link diagram and $C$ denote the set of its crossings. For $c\in C$, suppose that $x$ is the color assigned to the under arc for which the orientation normal of the over arc points away from and $y$ is the color of the over arc as in Figure \ref{fig:colorlink}. Define $\theta_c=\theta(x,y)^\epsilon$ to be the $\theta$-weight of the $X$-colored crossing $c$ where $\epsilon$ is $+1$ if the crossing is positive and $-1$ if the crossing is negative. The $\theta$-weight of an $X$-colored link diagram is the product of all  $\theta_c$'s. The quandle cocycle invariant is the formal sum of all $\theta$-weights. The {\it quandle cocycle invariant} of $L$ is \[ \Phi_\theta(L)=\sum_{\text{$X$-colorings}}\ \prod_{c\in C}\theta_{c}\in \mathbb{Z}[A].\] A proof that $\Phi_\theta(L)$ does not depend on the diagram of $L$ can be found in \cites{cocycle,kamada2017surface,CKS}.

\section{The quandle $P_n^\sigma$}
\label{sec:pnsigma}


  Let $S_n$ be the symmetric group on the first $n$ positive integers and $\sigma \in S_n$ a permutation. Define the operation $*$ on $P_{n}^\sigma =\{0,1,2,\dots,n\}$ by 
  \[ x*y=\begin{cases} 
      x &  y\neq0 \\
    \sigma(x) & y=0, x\neq 0  \\
      0 & y=0,x=0.
   \end{cases}
\] This is also defined by permuting the positive entries in the first column of the trivial quandle $T_{n+1}$'s Cayley table with respect to $\sigma$, see Table \ref{tab:P}.

\begin{table}[ht]

\begin{tabular}{c|c c c c c  }

& 0&1&2 & $\cdots$ & $n$ \\\hline 
 0 & 0 &0&0 & $\cdots$ & 0\\ 
1 & $\sigma(1)$ &1&1 & $\cdots$ & 1\\
2 & $\sigma(2)$ &2&2& $ \cdots $& 2\\ 
 $\vdots $&$\vdots $  &$\vdots $&$\vdots $& $\ddots $& $\vdots $\\ 
$n$ & $\sigma(n)$ &$n$&$n$&  $\cdots$ &$ n$\\ 
\end{tabular}
\vspace{4mm}
\caption{Cayley table of $P_{n}^\sigma$.}
\label{tab:P}
\end{table}
  
  \begin{proposition}
  
$(P_{n}^\sigma, *)$ is a quandle.
  \label{prop:quandle}
  \end{proposition}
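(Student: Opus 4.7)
The plan is to verify the three quandle axioms by a direct case analysis on which of the arguments are equal to $0$. Before starting, it is convenient to extend $\sigma\in S_n$ to a permutation of $\{0,1,\dots,n\}$ by declaring $\sigma(0)=0$; then the definition of $*$ collapses to the single rule
\[
x*y=\begin{cases} x & \text{if } y\neq 0,\\ \sigma(x) & \text{if } y=0,\end{cases}
\]
and the auxiliary clause $0*0=0$ becomes automatic. This reformulation makes the three axioms easier to check uniformly.

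Axiom (i) is immediate: if $x\neq 0$ then $x*x=x$ by the first branch, and $0*0=\sigma(0)=0$ by the second. Axiom (ii) follows because the column map $S_y(x)=x*y$ is the identity on $P_n^\sigma$ whenever $y\neq 0$, and equals the (extended) permutation $\sigma$ when $y=0$; both are bijections.

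The main work is axiom (iii). I would split on whether $z=0$. If $z\neq 0$, then $x*z=x$ and $y*z=y$, so the right-hand side is $x*y$, which also equals $(x*y)*z$. If $z=0$, I split further on $y$. When $y=0$ one computes $(x*0)*0=\sigma(\sigma(x))=\sigma^2(x)$ on the left and $(x*0)*(0*0)=\sigma(x)*0=\sigma^2(x)$ on the right. When $y\neq 0$ the left-hand side is $(x*y)*0=x*0=\sigma(x)$; the right-hand side is $\sigma(x)*\sigma(y)$, and since $\sigma$ preserves the subset $\{1,\dots,n\}$ of nonzero elements we have $\sigma(y)\neq 0$, giving $\sigma(x)*\sigma(y)=\sigma(x)$.

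There is no real obstacle here; the only thing to be careful about is tracking, in each subcase, which branch of the piecewise definition applies, and in particular using that $\sigma$ fixes $\{0\}$ setwise (equivalently that $\sigma$ and its extension both send nonzero inputs to nonzero outputs). Each of the finitely many cases reduces to a single application of $\sigma$ on either side.
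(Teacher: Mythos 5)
Your proof is correct and follows essentially the same route as the paper: a direct verification of the three axioms by case analysis on which arguments vanish. The only difference is cosmetic — extending $\sigma$ by $\sigma(0)=0$ collapses the paper's eight cases for axiom (iii) into three, which is a tidy streamlining but not a different argument.
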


  \begin{proof}
  
The diagonal of its Cayley table shows that the first axiom is satisfied and since each column is a permutation the second axiom satisfied. The third axiom \[ (x*y)*z=(x*z)*(x*z)\] follows from a case study, see Table \ref{tab:quandle}.   
    \end{proof}
  
  \begin{table}[ht]
\centering
\begin{tabular}{|c|c| c| c| }
\hline
$x$ & $y$&$z$ & $(x*y)*z=(x*z)*(y*z)$ \\\hline \hline 
+ &+& + & $x=x$ \\\hline 
+ &+& 0 & $\sigma(x)=\sigma(x)$ \\\hline 
+ &0& + & $\sigma(x)=\sigma(x)$ \\\hline 
0 &+& + & $0=0$ \\\hline 
+ &0& 0 & $\sigma^2(x)=\sigma^2(x)$ \\\hline 
0 &+& 0 & $0=0$ \\\hline 
0 &0& + & $0=0$ \\\hline 
0 &0& 0 & $0=0$ \\\hline 
\end{tabular}
\vspace{4mm}
\caption{The cases of Proposition \ref{prop:quandle}.}
\label{tab:quandle}
\end{table}

  If $\sigma=id$, then $P_{n}^\sigma$ is the trivial quandle $T_{n+1}$.

  \begin{lemma}
  
 If $\sigma\neq id$, then any quandle isomorphism between $P_n^\sigma$ and  $P_n^\tau$ fixes 0.  
 
 \label{lem:fix0}
  \end{lemma}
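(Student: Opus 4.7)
The plan is to exploit the asymmetry between column $0$ and the other columns of the Cayley table of $P_n^\sigma$. For any $y \neq 0$ the right-translation $S_y(x) = x * y$ is the identity on $P_n^\sigma$, as is visible from Table \ref{tab:P}. On the other hand, the hypothesis $\sigma \neq id$ guarantees that $S_0$ is a nontrivial permutation: there exists $a \in \{1,\dots,n\}$ with $a * 0 = \sigma(a) \neq a$. This is the structural feature of $0$ that we want to transport along any isomorphism.

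Given a quandle isomorphism $f : P_n^\sigma \to P_n^\tau$, I would feed the element $a$ through the homomorphism property to pin down $f(0)$. From $a * 0 \neq a$ and $f(a * 0) = f(a) * f(0)$ we get $f(a) * f(0) \neq f(a)$. But in $P_n^\tau$ the rule $x * y = x$ holds for every $y \neq 0$, so the only way this inequality can hold is if $f(0) = 0$.

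There is really no substantive obstacle here; the argument is essentially a one-line consequence of reading the first column of the Cayley table. The only edge case worth checking is $\tau = id$, where $P_n^\tau = T_{n+1}$ has every column trivial and the same calculation yields a contradiction, showing that no isomorphism exists so the lemma holds vacuously. Stylistically one could alternatively phrase the proof via inner automorphisms by noting $f \circ S_y^\sigma = S_{f(y)}^\tau \circ f$, so that $S_{f(0)}^\tau$ is conjugate to the nontrivial permutation $S_0^\sigma$ and hence must itself be nontrivial in $P_n^\tau$, forcing $f(0) = 0$; I would favor the direct formulation above since it avoids invoking $Inn$ and only uses a single well-chosen test element.
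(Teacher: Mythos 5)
Your proposal is correct and is essentially the paper's own argument in contrapositive form: both rest on the single computation $f(a*0)=f(a)*f(0)$ together with the fact that every column of $P_n^\tau$ other than column $0$ is the identity permutation (the paper assumes $f(0)\neq 0$ and deduces $\sigma(a)=a$ for all $a$, while you pick a witness $a$ with $\sigma(a)\neq a$ and deduce $f(0)=0$ directly). No substantive difference.
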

  
  \begin{proof}
  
  Suppose that $f$ is an isomorphism with $f(0)\neq0$. For any $a\neq 0$, $f(\sigma(a))=f(a*0)=f(a)*f(0)=f(a)$ implying that $\sigma(a)=a$.

  \end{proof}

  \begin{theorem}

$  P_n^\sigma \cong P_n^\tau$ if and only if $\sigma \ \text{is conjugate to} \ \tau.$
  \label{thm:conjugate}
  
  \end{theorem}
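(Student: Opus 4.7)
The plan is to prove both implications directly, leveraging the earlier Lemma~\ref{lem:fix0} for the harder direction. I expect this to be largely routine: the quandle structure of $P_n^\sigma$ is so close to the trivial quandle that any isomorphism is forced to be essentially a relabeling of the positive elements, and this relabeling is precisely the conjugating element.

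For the reverse implication, suppose $\tau = g\sigma g^{-1}$ for some $g\in S_n$. I would define $f\colon P_n^\sigma \to P_n^\tau$ by $f(0)=0$ and $f(a)=g(a)$ for $a\in\{1,\dots,n\}$, which is clearly a bijection. Checking that $f$ is a quandle homomorphism reduces to three simple cases on whether the second argument is $0$ and whether we are operating on $0$ or a positive element: when $y\neq 0$ both sides collapse to $f(x)$; when $y=0$ and $x=0$ both sides are $0$; when $y=0$ and $x\neq 0$ we compute $f(x*y)=g(\sigma(x))$ and $f(x)*f(y)=g(x)*0=\tau(g(x))=g\sigma g^{-1}(g(x))=g(\sigma(x))$, as required.

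For the forward implication, I would dispose of the degenerate case first: if $\sigma=\mathrm{id}$, then $P_n^\sigma=T_{n+1}$ is the trivial quandle, so every column of its Cayley table is the identity permutation. Any quandle isomorphic to $T_{n+1}$ must have this property, so $P_n^\tau\cong T_{n+1}$ forces $\tau=\mathrm{id}$, and the identity is (trivially) conjugate to itself. Otherwise $\sigma\neq \mathrm{id}$, and I invoke Lemma~\ref{lem:fix0}: any isomorphism $f\colon P_n^\sigma\to P_n^\tau$ satisfies $f(0)=0$. Hence $f$ restricts to a bijection $g\colon\{1,\dots,n\}\to\{1,\dots,n\}$, i.e.\ an element of $S_n$. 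For any $a\neq 0$,
\[ g(\sigma(a))=f(\sigma(a))=f(a*0)=f(a)*f(0)=g(a)*0=\tau(g(a)), \]
so $g\sigma=\tau g$ on positive elements, i.e.\ $\tau=g\sigma g^{-1}$ in $S_n$, as desired.

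There is no real obstacle here; the only subtlety is remembering to handle $\sigma=\mathrm{id}$ separately since Lemma~\ref{lem:fix0} is stated under the hypothesis $\sigma\neq\mathrm{id}$. One should also make sure to note that when $\sigma\neq\mathrm{id}$, the symmetric statement $\tau\neq\mathrm{id}$ holds automatically: if $P_n^\tau$ were trivial then so would $P_n^\sigma$ be, contradicting $\sigma\neq\mathrm{id}$.
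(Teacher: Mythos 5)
Your proof is correct and follows essentially the same route as the paper: Lemma~\ref{lem:fix0} forces $f(0)=0$, the computation $f(\sigma(a))=\tau(f(a))$ gives conjugacy, and the converse extends the conjugating permutation by fixing $0$ and checking cases. Your explicit handling of the degenerate case $\sigma=\mathrm{id}$ (which the paper glosses over, since Lemma~\ref{lem:fix0} is only stated for $\sigma\neq\mathrm{id}$) is a small but welcome extra bit of care.
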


  \begin{proof}
  
  Suppose that $f: P_n^\sigma\to P_n^\tau$ is a quandle isomorphism and $x\in P_n$ is positive. Lemma \ref{lem:fix0} implies that $f(0)=0$. Therefore, $f(\sigma(x))=f(x*0)=f(x)*f(0)=f(x)*0=\tau(f(x))$, i.e.  $\sigma=f^{-1}\tau f$ on the positive integers. 

Now suppose that $\sigma=h^{-1}\tau h$ for some permutation $h\in S_n$. Extend $h$ to $h':P_n^\sigma\to P_n^\tau$ by setting $h'(0)=0$ and $h'(x)=h(x)$ for $x\neq 0$. A case study shows that $h'$ is a quandle homomorphism thus isomorphism. If $a,b\neq 0$, then $h'(a*b)=h'(a)=h(a)=h(a)*h(b)=h'(a)*h'(b)$ since $h(b)\neq 0$. If $a\neq b=0$, then $h'(a*b)=h(\sigma(a))=\tau(h(a))=h(a)*0=h'(a)*h'(b).$ If $a=0$, then  $h'(a*b)=h'(0)=0=0*h'(b)=h'(a)*h'(b)$ for any $b$.

  \end{proof}

    \begin{theorem}
  
If $\sigma\neq id$, then  $Aut(P_n^\sigma)\cong C_{S_n}(\sigma)$ and $Inn(P_n^\sigma)\cong \mathbb{Z}_{|\sigma|}$ where $C_{S_n}(\sigma)$ denotes $\sigma$'s centralizer and $|\sigma|$ is the order of $\sigma$.
  
  \end{theorem}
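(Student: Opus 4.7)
The plan is to use Lemma \ref{lem:fix0} together with the conjugation argument from Theorem \ref{thm:conjugate} for the automorphism group, and then to compute each generator $S_y$ of $Inn(P_n^\sigma)$ directly from the Cayley table.

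For $Aut(P_n^\sigma)$, I first observe that any automorphism $f$ of $P_n^\sigma$ satisfies $f(0)=0$ by Lemma \ref{lem:fix0}, so $f$ restricts to a permutation $h$ of $\{1,2,\dots,n\}$. The same calculation as in Theorem \ref{thm:conjugate}, applied with $\tau=\sigma$, shows that $f(\sigma(x))=\sigma(f(x))$ for all positive $x$, so $h\sigma=\sigma h$, i.e., $h\in C_{S_n}(\sigma)$. Conversely, the second half of the proof of Theorem \ref{thm:conjugate} shows that any $h\in C_{S_n}(\sigma)$ extends by $h'(0)=0$ to a quandle homomorphism $h':P_n^\sigma\to P_n^\sigma$, which is bijective since $h$ is. Define
\[
\Phi:Aut(P_n^\sigma)\longrightarrow C_{S_n}(\sigma),\qquad \Phi(f)=f|_{\{1,\dots,n\}}.
\]
The discussion above shows $\Phi$ is a well-defined bijection, and since composition of extended maps agrees with composition of the underlying permutations (both fix $0$ and are given by composition on positives), $\Phi$ is a group isomorphism.

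For $Inn(P_n^\sigma)$, I read off each generator $S_y$ from Table \ref{tab:P}. If $y\neq 0$, then $x*y=x$ for every $x$, so $S_y$ is the identity permutation. If $y=0$, then $S_0(x)=\sigma(x)$ for $x\neq 0$ and $S_0(0)=0$; in other words, $S_0$ is the extension $\sigma'$ of $\sigma$ to $P_n^\sigma$ fixing $0$. Hence $Inn(P_n^\sigma)=\langle S_0\rangle$ is cyclic. Its order equals the order of $\sigma'$ as a permutation of $\{0,1,\dots,n\}$, which in turn equals $|\sigma|$ since adjoining a fixed point does not change the order. Therefore $Inn(P_n^\sigma)\cong\mathbb{Z}_{|\sigma|}$.

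There is no real obstacle here, as both statements follow from the preceding lemma/theorem plus an inspection of the Cayley table; the only point requiring a moment of care is the verification that $\Phi$ is a homomorphism, which is immediate once one notes that the extension $h\mapsto h'$ is itself a monoid homomorphism with respect to composition.
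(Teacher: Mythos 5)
Your proposal is correct and follows essentially the same route as the paper: restriction to the positive integers gives a map $Aut(P_n^\sigma)\to C_{S_n}(\sigma)$ (using Lemma \ref{lem:fix0}), the extension $h\mapsto h'$ from the proof of Theorem \ref{thm:conjugate} with $\tau=\sigma$ provides the inverse, and $Inn(P_n^\sigma)$ is read off from the generators $S_0=\sigma$ and $S_{a\neq 0}=id$. No substantive differences.
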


\begin{proof}

For any $f\in Aut(P_n^\sigma)$, let $f_+$ be its restriction to $P_n^\sigma\setminus\{0\}=S_n$. Lemma \ref{lem:fix0} implies that any two automorphisms $f$ and $g$ satisfy $(g\circ f)_+=g_+\circ f_+$. For any positive $x\in P_n^\sigma$, 
\[f_+(\sigma(x))=f_+(x*0)=f(x*0)=f(x)*f(0)=f_+(x)*0=\sigma(f_+(x)),\] so $f_+\in C_{S_n}(\sigma).$ Therefore, the restriction map $f\mapsto f_+$ is a group homomorphism from $Aut(P_n^\sigma)$ to $C_{S_n}(\sigma)$. 

 For any $g \in C_{S_n}(\sigma)$, let $g_0:P_n^\sigma\to P_n^\sigma$ be its extension where $g_0(x)=g(x)$ for $x\neq 0$ and $g_0(0)=0$. A case study shows that the bijection $g_0$ is an automorphism. If $a,b\neq 0$, then $g_0(a*b)=g(a)=g(a)*g(b)=g_0(a)*g_0(b)$. If $a\neq b=0$, then \[g_0(a*b)=g_0(\sigma(a))=g(\sigma(a))=\sigma(g(a))=\sigma(g_0(a))=g_0(a)*0=g_0(a)*g_0(0).\]

\noindent If $b\neq a=0$ or $a=b=0$, then $g_0(a*b)=0=g_0(a)*g_0(b)$. Altogether, $g_0$ is an automorphism and the extension map $g\mapsto g_0$ is inverse to the restriction map $f\mapsto f_+$.

There are two permutations that generate $Inn(P_n^\sigma),$ $S_0(x)=x*0=\sigma(x)$ and $S_{a\neq0}(x)=x*a=x$. These generate the cyclic group of the same order as $\sigma$.

\end{proof}

\begin{theorem}

Let $\alpha$ be the number of positive integers fixed by a non-trivial $\sigma$. The quandle polynomial of $P_n^\sigma$ is \[ qp_{P_n^\sigma}(s,t)=\alpha s^{n+1}t^{n+1}+(n-\alpha)s^nt^{n+1}+s^{n+1}t^{1+\alpha}.\]

\end{theorem}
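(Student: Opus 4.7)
The plan is to compute $r(x)$ and $c(x)$ directly from the Cayley table of $P_n^\sigma$ (Table \ref{tab:P}), splitting into the cases $x=0$ and $x\neq 0$, then assembling the sum defining $qp_{P_n^\sigma}(s,t)$.

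First I would handle the element $0$. Reading the first row of the Cayley table gives $0*y=0$ for every $y\in P_n^\sigma$, so $r(0)=n+1$. For $c(0)$ I would read the first column: $y*0=\sigma(y)$ for $y\neq 0$ and $0*0=0$, so $y*0=y$ exactly when $y=0$ or $y$ is a positive fixed point of $\sigma$, giving $c(0)=1+\alpha$. Thus $x=0$ contributes the summand $s^{n+1}t^{1+\alpha}$.

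Next I would handle positive $x$. Since $x\neq 0$, the $x$-column of the table is the identity (every entry is just the row label), so $y*x=y$ for all $y\in P_n^\sigma$, giving $c(x)=n+1$. For $r(x)$, the $x$-row tells us $x*y=x$ for every $y\neq 0$, while $x*0=\sigma(x)$; hence $x*y=x$ for all $n+1$ values of $y$ if $\sigma(x)=x$, and for exactly $n$ values if $\sigma(x)\neq x$. Therefore each of the $\alpha$ positive fixed points of $\sigma$ contributes $s^{n+1}t^{n+1}$, and each of the remaining $n-\alpha$ positive elements contributes $s^nt^{n+1}$.

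Summing the contributions over $x\in P_n^\sigma$ yields the stated formula. There is no real obstacle here: the argument is a direct bookkeeping exercise once one observes that the only column differing from the identity is the $0$-column, and the only row differing from a constant row is the $0$-row, so the nontriviality is entirely concentrated at $0$ and at the fixed-point set of $\sigma$.
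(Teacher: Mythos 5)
Your proposal is correct and follows exactly the same route as the paper's proof: compute $r$ and $c$ case by case for $0$, for positive fixed points of $\sigma$, and for the remaining positive elements, then sum. You simply spell out the table-reading justifications that the paper leaves implicit.
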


\begin{proof}
For a positive $x$ fixed by $\sigma$, $r(x)=c(x)=n+1$. If $x$ is positive and not fixed by $\sigma$, then $r(x)=n$ and $c(x)=n+1$. Finally, $r(0)=n+1$ and $c(0)=1+\alpha$.
\end{proof}

\begin{remark}
The quandle polynomial distinguishes the isomorphism classes of $P_n^\sigma$ and $P_n^\tau$ if and only if $\sigma$ and $\tau$ have a different number of fixed points.
\end{remark}

\begin{lemma}

$P_n^\sigma$ is an abelian for any $\sigma$.
\label{lem:abelian}
\end{lemma}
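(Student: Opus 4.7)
The plan is to verify the medial identity $(x*y)*(z*w) = (x*z)*(y*w)$ by working with right translations. Writing $S_y(x) := x*y$, the Cayley table of $P_n^\sigma$ shows at once that $S_y = \mathrm{id}$ whenever $y\neq 0$, while $S_0$ is the extension of $\sigma$ by $S_0(0)=0$. In particular every right translation lies in the cyclic subgroup $\langle S_0\rangle$ of the symmetric group on $P_n^\sigma$, and this subgroup is abelian.

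The second step is to show that $S_{z*w} = S_z$ and $S_{y*w} = S_y$ as maps on $P_n^\sigma$. For this it suffices to check that $z*w = 0$ iff $z=0$: when $w\neq 0$ one has $z*w = z$, and when $w=0$ one has $0*0 = 0$ while $z*0 = \sigma(z) \neq 0$ for $z\neq 0$. So $z*w$ is nonzero precisely when $z$ is nonzero, and hence $S_{z*w}$ is the identity exactly when $S_z$ is; otherwise both equal $S_0$. The same argument works for $y*w$.

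Combining these two observations collapses the medial identity to
\[
(x*y)*(z*w) \;=\; S_{z*w}\!\circ\! S_y (x) \;=\; S_z \!\circ\! S_y(x) \;=\; S_y \!\circ\! S_z(x) \;=\; S_{y*w}\!\circ\! S_z(x) \;=\; (x*z)*(y*w),
\]
where the middle equality is the commutativity of $S_y$ and $S_z$ inside the cyclic (hence abelian) group $\langle S_0\rangle$. There is no real obstacle: once one notices that every right translation is either the identity or the fixed permutation $S_0$, mediality is immediate, and no four-variable case analysis is required.
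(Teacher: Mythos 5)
Your proof is correct, and it takes a genuinely different route from the paper. The paper verifies the medial identity $(x*y)*(z*w)=(x*z)*(y*w)$ by a direct case analysis over the sign patterns of the four variables (the case $x=0$ plus the eight cases tabulated in Table \ref{tab:abelian}). You instead observe that every right translation $S_y$ is either the identity (for $y\neq 0$) or the single permutation $S_0$ extending $\sigma$, so all translations lie in the abelian group $\langle S_0\rangle$; together with the check that $a*b=0$ iff $a=0$ (hence $S_{a*b}=S_a$), the identity collapses to the commutativity of $S_y$ and $S_z$. This is in effect a special case of the general fact that a quandle whose inner automorphism group is abelian is medial --- indeed your step $S_{a*b}=S_a$ also follows formally from the third quandle axiom, which gives $S_{a*b}=S_b^{-1}S_aS_b$, once $\mathrm{Inn}(P_n^\sigma)$ is known to be abelian (as the paper later shows it is cyclic). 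What your approach buys is brevity, a conceptual explanation for why the case table must close up, and an argument that generalizes beyond this family; what the paper's table buys is a self-contained verification requiring no observations about the translation group.
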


\begin{proof}

The abelian condition \[ (x*y)*(z*w)=(x*z)*(y*w)\] is verified by checking all permutations of the variables being positive or zero. If $x=0$, the equation simplifies to $0=0$ for any $y,z$, and $w$. Table \ref{tab:abelian} verifies the cases where $x\neq 0$. 

\end{proof}

\begin{table}[ht]
\centering
\begin{tabular}{|c|c| c| c| c|}
\hline
$x$ & $y$&$z$&$w$ & $(x*y)*(z*w)=(x*z)*(y*w)$ \\\hline \hline 
$+$ & +&$+$&$+$ & $x=x$ \\\hline 
$+$ & 0& 0 & 0 & $\sigma^2(x)=\sigma^2(x)$ \\\hline 
$+$ & +& 0 & 0 & $\sigma(x)=\sigma(x)$ \\\hline 
$+$ & 0& + & 0 & $\sigma(x)=\sigma(x)$ \\\hline 
$+$ & 0& 0 & + & $\sigma^2(x)=\sigma^2(x)$ \\\hline 
$+$ & +& + & 0 & $x=x$ \\\hline 
$+$ & +& 0 & +& $\sigma(x)=\sigma(x)$ \\\hline 
$+$ & 0& + & + & $\sigma(x)=\sigma(x)$ \\\hline 
\end{tabular}
\vspace{4mm}
\caption{The cases of Lemma \ref{lem:abelian} where $x\neq 0$.}
\label{tab:abelian}
\end{table}

\begin{theorem}
Let $\sigma$ and $\sigma'$ be $n$-cycles. Table \ref{tab:cycle} gives the Cayley table of  \[ Hom(P_n^{\sigma},P_n^{\sigma'}).\] 
\end{theorem}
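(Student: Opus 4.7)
The plan is to first enumerate the elements of $Hom(P_n^\sigma,P_n^{\sigma'})$, and then read off the Cayley table from the pointwise operation $(f*g)(x)=f(x)*g(x)$ supplied by Crans--Nelson (valid since $P_n^{\sigma'}$ is abelian by Lemma \ref{lem:abelian}). Identifying each $f$ with its tuple $(f(0),f(1),\ldots,f(n))$, the computation reduces to componentwise arithmetic in $P_n^{\sigma'}$.

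For the enumeration, I would first exploit that $x*y=x$ in $P_n^\sigma$ whenever $y\neq 0$, so $f(x)=f(x)*f(y)$ for all $x$ and all $y\neq 0$. In $P_n^{\sigma'}$ the equation $a=a*b$ forces $b\neq 0$ or $a=0$, since $\sigma'$ is an $n$-cycle and fixes no positive integer. Hence either $f$ is the zero map (call it $\mathbf{0}$), or $f(y)\neq 0$ for every $y\in\{1,\ldots,n\}$. Specializing next to $y=0$, $x\neq 0$ gives the key identity $f(\sigma(x))=f(x)*f(0)$, and splitting on whether $f(0)=0$ yields two families. If $f(0)=0$, then $f$ intertwines $\sigma$ with $\sigma'$ on positive integers, and since $\sigma$ is an $n$-cycle, $f|_+$ is determined by $f(1)\in\{1,\ldots,n\}$, producing $n$ maps $\iota_1,\ldots,\iota_n$. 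If $f(0)=c\neq 0$, then $f(\sigma(x))=f(x)$ for $x\neq 0$, so transitivity of $\sigma$ makes $f|_+$ a constant $d$; the remaining case $x=0,\,y\neq 0$ then forces $c=c*d$, whence $d\neq 0$, producing $n^2$ maps $C_{c,d}$ with $c,d\in\{1,\ldots,n\}$.

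The Cayley table then follows mechanically from the observation that in $P_n^{\sigma'}$, $f(x)*g(x)$ equals $f(x)$ when $g(x)\neq 0$, equals $\sigma'(f(x))$ when $g(x)=0$ and $f(x)\neq 0$, and equals $0$ otherwise. Working through the nine pairs of families yields
\[\mathbf{0}*g=\mathbf{0},\quad \iota_j*\mathbf{0}=\iota_{\sigma'(j)},\quad \iota_j*\iota_k=\iota_j*C_{a,b}=\iota_j,\]
\[C_{c,d}*\mathbf{0}=C_{\sigma'(c),\sigma'(d)},\quad C_{c,d}*\iota_k=C_{\sigma'(c),d},\quad C_{c,d}*C_{a,b}=C_{c,d},\]
from which Table \ref{tab:cycle} can be transcribed.

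The main obstacle is the enumeration: one must verify that each candidate $\iota_j$ and $C_{c,d}$ genuinely satisfies the homomorphism condition in all four case combinations of $x,y$ being zero or positive (not merely the cases used to derive it), and confirm that no hybrid possibilities have been missed (e.g.\ $f(0)=0$ with $f|_+$ only partially zero, which is ruled out by the first step). Once the classification is pinned down, filling in the table is routine componentwise bookkeeping.
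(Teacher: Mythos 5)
Your proposal is correct and follows essentially the same route as the paper: rule out partially-zero maps using that an $n$-cycle has no fixed points, split on whether $f(0)=0$ to get the three families parameterized by $(f(0),f(1))$, and read off the table from the pointwise (componentwise) operation. Your explicit formulas $\iota_j*\mathbf{0}=\iota_{\sigma'(j)}$, $C_{c,d}*\mathbf{0}=C_{\sigma'(c),\sigma'(d)}$, $C_{c,d}*\iota_k=C_{\sigma'(c),d}$, etc.\ agree with the entries of Table \ref{tab:cycle}, and your flagged verification that each candidate is genuinely a homomorphism is the same (routine) check the paper likewise leaves implicit.
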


\begin{proof}

Assume that $\sigma=\sigma'=(123\cdots n)$ by Theorem \ref{thm:conjugate}. Let $f\in Hom(P_n^{\sigma},P_n^{\sigma'})$. If $f(k)=0$ for some $k\neq 0$,  then  \[f(\sigma(k))=f(k*0)=f(k)*f(0)=0.\]  This implies that $f$ is zero on all positive integers since $\sigma$ is an $n$-cycle. If further $f(0)\neq 0$,  then $f(0)=f(0*k)=f(0)*f(k)=f(0)*0=\sigma'(f(0))$. Since $\sigma'$ has no fixed pointed, $f(0)=0$. Therefore, $f(k)=0$ for some $k\neq 0$ only if $f$ is the zero map.

Now suppose that $f(k)$ is positive whenever $k$ is positive. If $f(0)=0$, then $f$ is determined by where it sends 1. For any positive $k$, $f(k)=f(\sigma^k(1))=f(1*^k0)=f(1)*^k0=(\sigma')^k(f(1)).$ If $f(0)\neq 0$, then $f(1)=f(2)=\cdots=f(n)$ since $f(\sigma(k))=f(k*0)=f(k)*f(0)=f(k)$. 

The map $f$ is determined by its values on $0$ and $1$. If $f$ is not the zero map, then $f(0)\in P_n^{\sigma'}$ and $f(1)\in P_n^{\sigma'}\setminus \{0\}$. The set $Hom(P_n^{\sigma},P_n^{\sigma'})$ is parameterized by the tuples $(f(0),f(1))$ and its quandles structure is described in Table \ref{tab:cycle}.

\begin{table}
\centering
\begin{tabular}{c|c c c c c c c c c c c c c c c}

 & (0,0)&(0,1)&(0,2) & $\cdots$ & $(0,n)$ & (1,1) & $\cdots$ & $(1,n)$& $(2,1)$& $\cdots$ & $(2,n)$ & $\cdots$ & $(n,1)$&$\cdots$ & $(n,n)$ \\ \hline 
 (0,0) & (0,0) & (0,0)& (0,0) & $\cdots$  &(0,0) & (0,0)& $\cdots$ & (0,0)& (0,0)& $\cdots$ & (0,0)& $\cdots$ & (0,0)& $\cdots$ & (0,0)\\ 
(0,1) &(0,2) & (0,1) & (0,1)&$\cdots$ &(0,1) &(0,1)&$\cdots$&(0,1)&(0,1)& $\cdots$ & (0,1)& $\cdots$ & (0,1)& $\cdots$& (0,1) \\
(0,2) &(0,3) & (0,2) & (0,2)&$\cdots$ &(0,2) &(0,2)&$\cdots$&(0,2)&(0,2)& $\cdots$ & (0,2)& $\cdots$ & (0,2)& $\cdots$& (0,2) \\
$\vdots$ &$\vdots$  &$\vdots$&$\vdots$& $\ddots$& $\vdots$&$\vdots$&$\ddots$&$\vdots$&$\vdots$& $\ddots$ & $\vdots$ &$\ddots$ & $\vdots$& $\ddots$& $\vdots$ \\ 
$(0,n)$ &(0,1) & $(0,n)$ & $(0,n)$&$\cdots$ &$(0,n)$ &$(0,n)$&$\cdots$&$(0,n)$&$(0,n)$& $\cdots$ & $(0,n)$& $\cdots$ & $(0,n)$& $\cdots$& $(0,n)$ \\
(1,1) & (2,2) &(2,1)&(2,1)&  $\cdots$ &(2,1)&(1,1)& $\cdots$ & (1,1)& (1,1)& $\cdots$ & (1,1)& $\cdots$ & (1,1)& $\cdots$ & (1,1)\\ 
(1,2) & (2,3) &(2,2)&(2,2)&  $\cdots$ &(2,2)&(1,2)& $\cdots$ & (1,2)& (1,2)& $\cdots$ & (1,2)& $\cdots$ & (1,2)& $\cdots$ & (1,2)\\ 
$\vdots$ &$\vdots$  &$\vdots$&$\vdots$& $\ddots$& $\vdots$&$\vdots$&$\ddots$&$\vdots$&$\vdots$& $\ddots$ & $\vdots$ &$\ddots$ & $\vdots$& $\ddots$& $\vdots$ \\ 
$(1,n)$& (2,1) &  $(2,n)$ &$(2,n)$ &$\cdots$&$(2,n)$&$(1,n)$&$\cdots$& $(1,n)$& $(1,n)$& $\cdots$ & $(1,n)$& $\cdots$ & $(1,n)$& $\cdots$ & $(1,n)$\\ 
$(2,1)$& (3,2) &  (3,1) &(3,1) &$\cdots$&(3,1)&$(2,1)$&$\cdots$& $(2,1)$& $(2,1)$& $\cdots$ & $(2,1)$& $\cdots$ & $(2,1)$& $\cdots$ & $(2,1)$\\ 
$(2,2)$& (3,3) &  (3,2) &(3,2) &$\cdots$ &(3,2) &$(2,2)$&$\cdots$& $(2,2)$& $(2,2)$& $\cdots$ & $(2,2)$& $\cdots$ & $(2,2)$& $\cdots$ & $(2,2)$\\ $\vdots$ &$\vdots$  &$\vdots$&$\vdots$& $\ddots$& $\vdots$&$\vdots$&$\ddots$&$\vdots$&$\vdots$& $\ddots$ & $\vdots$ &$\ddots$ & $\vdots$& $\ddots$& $\vdots$ \\ 
$(2,n)$& (3,1) &  $(3,n)$ &$(3,n)$ &$\cdots$&$(3,n)$&$(2,n)$&$\cdots$& $(2,n)$& $(2,n)$& $\cdots$ & $(2,n)$& $\cdots$ & $(2,n)$& $\cdots$ & $(2,n)$\\ 
$\vdots$ &$\vdots$  &$\vdots$&$\vdots$& $\ddots$& $\vdots$&$\vdots$&$\ddots$&$\vdots$&$\vdots$& $\ddots$ & $\vdots$ &$\ddots$ & $\vdots$& $\ddots$& $\vdots$ \\ 
$(n,1)$& (1,2) &  (1,1) &(1,1) &$\cdots$&(1,1)&$(n,1)$&$\cdots$& $(n,1)$& $(n,1)$& $\cdots$ & $(n,1)$& $\cdots$ & $(n,1)$& $\cdots$ & $(n,1)$\\ 
$(n,2)$& (1,3) &  (1,2) &(1,2) &$\cdots$&(1,2)&$(n,2)$&$\cdots$& $(n,2)$& $(n,2)$& $\cdots$ & $(n,2)$& $\cdots$ & $(n,2)$& $\cdots$ & $(n,2)$\\ $\vdots$ &$\vdots$  &$\vdots$&$\vdots$& $\ddots$& $\vdots$&$\vdots$&$\ddots$&$\vdots$&$\vdots$& $\ddots$ & $\vdots$ &$\ddots$ & $\vdots$& $\ddots$& $\vdots$ \\ 
$(n,n)$& (1,1) &  $(1,n)$ &$(1,n)$ &$\cdots$&$(1,n)$&$(n,n)$&$\cdots$& $(n,n)$& $(n,n)$& $\cdots$ & $(n,n)$& $\cdots$ & $(n,n)$& $\cdots$ & $(n,n)$\\ 
\end{tabular}
\vspace{4mm}
\caption{Cayley table of $Hom(P_n^{n-cycle},P_n^{n-cycle})$.}
\label{tab:cycle}
\end{table}

\end{proof}

\begin{corollary}
Table \ref{tab:hom} gives the Cayley table of $Hom(P_3,P_3)$.
\end{corollary}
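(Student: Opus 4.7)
The plan is to identify $P_3$ as a member of the $P_n^\sigma$ family and then invoke the preceding theorem. The Cayley table of $P_3$ displayed in Table \ref{tab:3} shows that $0*0=0$, $1*0=2$, $2*0=1$, and $x*y=x$ whenever $y\neq 0$, so $P_3=P_2^{(1\,2)}$ in the notation of Section \ref{sec:pnsigma}. The transposition $(1\,2)$ is the unique non-identity element of $S_2$, and is a $2$-cycle; consequently the preceding theorem applies with $n=2$ and $\sigma=\sigma'=(1\,2)$.

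First I would specialize the parameterization from the theorem to $n=2$. This identifies $Hom(P_3,P_3)$ with the tuples $(f(0),f(1))$: the zero map corresponds to $(0,0)$, while every other homomorphism has $f(1)\in\{1,2\}$ and $f(0)\in\{0,1,2\}$. This yields $1+2\cdot 3=7$ elements, namely $(0,0)$, $(0,1)$, $(0,2)$, $(1,1)$, $(1,2)$, $(2,1)$, $(2,2)$, which are to become the row and column labels of Table \ref{tab:hom}.

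Next I would read off the entries from Table \ref{tab:cycle} restricted to these seven indices, using that the action of $(1\,2)$ on $\{1,2\}$ swaps $1\leftrightarrow 2$. For example, $(1,1)*(0,1)$ equals the tuple whose first coordinate is $\sigma(1)=2$ (because the second argument has $f(0)=0$, triggering the non-trivial column of the first factor) and whose second coordinate is $1*1=1$; so $(1,1)*(0,1)=(2,1)$, in agreement with the $n=2$ slice of Table \ref{tab:cycle}. The remaining entries follow from the same componentwise recipe, backed by the Crans--Nelson theorem guaranteeing that the pointwise operation on $Hom(P_3,P_3)$ is the componentwise operation on a subquandle of $P_3\times P_3$.

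I do not anticipate any real obstacle: the corollary is merely the $n=2$ case of the theorem, so the work is essentially bookkeeping, amounting to deleting the spurious rows and columns from the general template and recording the result. The only place one must be attentive is the first coordinate, where the alternation $\sigma,\sigma^2=\mathrm{id},\sigma,\ldots$ collapses to period two rather than period $n$ as in the generic table.
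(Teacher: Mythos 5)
Your proposal is correct and takes essentially the same approach as the paper: the paper likewise derives the seven homomorphisms from the preceding theorem's parameterization (listing them as full triples $(f(0),f(1),f(2))$ rather than pairs $(f(0),f(1))$) and fills in the table via the pointwise operation. Your identification $P_3=P_2^{(1\,2)}$ and your sample computation $(1,1)*(0,1)=(2,1)$ agree with the entries of Table \ref{tab:hom}.
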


\begin{proof}

There are 7 endomorphisms described as the triples $(f(0),f(1),f(2))$: $0=(0,0,0), 1=(0,1,2), 2=(0, 2, 1), 3=(1,1,1), 4=(2,2,2),  5=(1,2,2), 6=(2,1,1)$. 

\begin{table}[ht]
\centering
\begin{tabular}{c|c c c c c c c }

 & 0&1&2 & 3& 4&5&6 \\\hline 
 0 & 0 &0&0 & 0 & 0&0&0\\ 
1 &2 & 1 & 1&1&1 &1&1\\
2 & 1 &2& 2&2&2&2&2\\ 
3&4  &6&6& 3& 3&3&3\\ 
4 & 3 &5&5&  4 &4&4&4\\ 
5 & 6 &  4 &4&5&5&5&5\\ 
6& 5 &  3 &3 &6&6&6&6\\ 
\end{tabular}
\vspace{4mm}
\caption{Cayley table of $Hom(P_3,P_3) $.}
\label{tab:hom}
\end{table}

\end{proof}

\section{Cohomology groups of $P_n^\sigma$}
\label{sec:cohomology}

 The goal of this section is to compute $H^2_Q(P_n^\sigma;A)$, and classify the good involutions of $P_n^\sigma$ to compute $H^2_{Q,\rho}(P_3;\mathbb{Z}_2)$.  The computations will follow the notation of \cite{cocycle} where the coefficient group $A$ is taken to be $\mathbb{Z}$, $\mathbb{Z}_p$, or $\mathbb{Q}$.

\begin{theorem}
Let $k$ be the number of cycles in $\sigma$. For any $A$, \[H^2_Q(P_n^\sigma;A)=A^{k^2+k}.\]
\label{thm:cohom2}
\end{theorem}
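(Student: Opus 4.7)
The plan is to compute $Z^2$ and $B^2$ directly from Lemma \ref{lem:cocycle}, exploiting the near-trivial structure of the operation on $P_n^\sigma$. Set $P = \{1, 2, \ldots, n\}$ and extend $\sigma$ to $X := P_n^\sigma$ by declaring $\sigma(0) = 0$, so that $x * y = x$ whenever $y \neq 0$ and $x * 0 = \sigma(x)$ for every $x \in X$. Substituting into the 2-cocycle identity and splitting into cases according to whether each of $x_1, x_2$ lies in $P$ or equals $0$, every case should collapse to $0 = 0$ except two. The case $x_1 = 0$, $x_2 \in P$ will yield
\[ f(\sigma(x_0), x_2) = f(x_0, x_2), \]
and the case $x_1 \in P$, $x_2 = 0$ will yield
\[ f(x_0, x_1) = f(\sigma(x_0), \sigma(x_1)). \]
Together with $f(x, x) = 0$, I expect these to be the only constraints.

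Next I would count $Z^2$ by partitioning $X \times X$ into the four blocks $\{(0,0)\}$, $\{0\} \times P$, $P \times \{0\}$, and $P \times P$, and observing that the cocycle relations never couple distinct blocks. The value at $(0,0)$ is pinned to $0$. On $\{0\} \times P$ the second relation (with $x_0 = 0$) forces $f(0, \cdot)$ to be constant on the $\sigma$-orbits in $P$, contributing $k$ free parameters. On $P \times \{0\}$ neither relation applies, giving $n$ free parameters. On $P \times P$ the two relations together force $f$ to be constant on $\sigma$-orbits in each coordinate separately, so $f$ factors through $(P/\sigma) \times (P/\sigma)$; the degeneracy $f(x,x) = 0$ then kills one value per diagonal orbit-pair, leaving $k^2 - k$ parameters. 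Hence $Z^2 \cong A^{k^2 + n}$.

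For $B^2$, the formula $(\delta g)(x, y) = g(x) - g(x * y)$ vanishes unless $(x, y) \in P \times \{0\}$, where it equals $g(x) - g(\sigma(x))$. So $B^2$ sits entirely inside the $P \times \{0\}$ block as the image of the operator $I - \sigma^{*} \colon A^P \to A^P$, $g \mapsto g - g \circ \sigma$. This operator decomposes along the cycles of $\sigma$: on a single cycle of length $r$, the image on $A^r$ is exactly the kernel of the sum map $A^r \to A$, and since that sum map is split-surjective (take $a \mapsto (a, 0, \ldots, 0)$), the cokernel is a free copy of $A$. Over all $k$ cycles the cokernel on $A^P$ is $A^k$. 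Combining with the untouched blocks yields
\[ H^2_Q(P_n^\sigma; A) \cong A^k \oplus A^{k^2 - k} \oplus A^k = A^{k^2 + k}. \]

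The main obstacle will be carrying out the initial case analysis of the cocycle identity without slip, and being careful that fixed points of $\sigma$ count as length-one cycles in $k$; the splitness of $A^r \to A$ is what lets the answer hold as an actual direct sum of copies of $A$ for an arbitrary coefficient group rather than only for a field.
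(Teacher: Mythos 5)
Your proposal is correct and follows essentially the same route as the paper: the same case analysis of the cocycle identity yields exactly the relations $f(\sigma(x_0),x_2)=f(x_0,x_2)$ and $f(x_0,x_1)=f(\sigma(x_0),\sigma(x_1))$, and the coboundary computation likewise localizes to the $P\times\{0\}$ block. Your explicit block-by-block count of $Z^2\cong A^{k^2+n}$ and of $B^2$ as the direct sum of the kernels of the cycle-wise sum maps is just a slightly more systematic packaging of the paper's normal-form argument, and the final count $k+(k^2-k)+k=k^2+k$ agrees.
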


\begin{proof}

A 2-cocycle $\phi\in Z^2(P_n^\sigma;A)$ can be expressed as $\phi=\sum_{i,j\in P_n^\sigma} C_{(i,j)} \rchi_{(i,j)}$ such that

\[ C_{(p,r)}+C_{(p*r,q*r)}=C_{(p,q)}+C_{(p*q,r)},\]

\noindent and $C_{(p,p)}=0$ for all $p,q,r\in P_n^\sigma$, see Lemma \ref{lem:cocycle}.  Checking all permutations of $p,q,$ and $r$ being 0 determines relations among the coefficients, see Table \ref{tab:homology2}.

\begin{table}[ht]
\centering
\begin{tabular}{|c|c| c| c| c|}
\hline
$p$ & $q$ & $r$ & $C_{(p,r)}+C_{(p*r,q*r)}=C_{(p,q)}+C_{(p*q,r)}$ \\\hline \hline 
0 & 0 & 0 & $0=0$ \\\hline 
+ & 0 & 0 & $C_{(p,0)}+C_{(\sigma(p),0)}=C_{(\sigma(p),0)}+C_{(p,0)}$ \\\hline 
0& + & 0 & $C_{(0,q)}=C_{(0,\sigma(q))}$ \\\hline 
0& 0 & + & $C_{(0,r)}=C_{(0,r)}$ \\\hline 
+& + & 0& $C_{(\sigma(p),\sigma(q))}=C_{(p,q)}$ \\\hline 
+& 0& +& $C_{(\sigma(p),r)}=C_{(p,r)}$ \\\hline 
0& +& +& $C_{(0,r)}+C_{(0,q)}=C_{(0,q)}+C_{(0,r)}$ \\\hline 
+ & + & + & $C_{(p,r)}+C_{(p,q)}=C_{(p,q)}+C_{(p,r)}$ \\\hline 
\end{tabular}
\vspace{4mm}
\caption{Coefficient relations of Theorem \ref{thm:cohom2}.}
\label{tab:homology2}
\end{table}

This gives the non-trivial relations \[C_{(0,\sigma(q))}=C_{(0,q)},\]  \[C_{(\sigma(p),\sigma(q))}=C_{(p,q),}\]  \[C_{(\sigma(p),r)}=C_{(p,r)},\] for all $p,q,r\neq 0$. Combining the second and third relation gives that $C_{(\sigma^l(a),\sigma^m(b))}=C_{(a,b)}$ for all $l,m$ and  $a,b\neq 0$. Therefore, $C_{(a,b)}=C_{(c,d)}$ whenever $orb(a)=orb(c)$ and $orb(b)=orb(d)$.

Consider the coboundaries of $C^1_Q(P_n^\sigma;A)$'s generators $\rchi_m$. If $m=0,$

\[ \delta \rchi_0(x_0,x_1)=\rchi_0(x_0)-\rchi_0(x_0*x_1)=0.\]

\noindent If $m\neq0$, 
\[ \delta \rchi_m(x_0,x_1)=\rchi_m(x_0)-\rchi_m(x_0*x_1)= 
\rchi_{(m,0)}- \rchi_{(\sigma^{-1}(m),0)}.   
 \]

\noindent Therefore, $\rchi_{(p,0)}$ is cohomologous to $\rchi_{(q,0)}$ whenever $orb_\sigma(p)=orb_\sigma(q)$. Let $m_1$, $m_2$, \dots, $m_k$ represent the cycles of $\sigma$. The relations and coboundary information give that $\phi$ is cohomologous to \begin{align*} \sum_{j=1}^k \left[C_{(0,m_j)}\left(\sum_{i\in orb_\sigma(m_j)} \rchi_{(0,i)}\right)\right]+&\sum_{j=1}^k \left[\left( \sum_{i\in orb_\sigma(m_j)} C_{(i,0)}\right) \rchi_{(m_j,0)}\right]\\ &+\sum_{j,h=1,j\neq h}^k\left[C_{(m_j,m_h)}\left(\sum_{i\in orb_\sigma(m_j)}\sum_{t\in orb_\sigma(m_h)}\rchi_{(i,t)}\right)\right],\end{align*} showing that the second homology group is free on $2k$ plus $k$ permute 2 generators.

\end{proof}

The following theorem characterizes $P_n^\sigma$'s good involutions. This will be used to calculate a second symmetric quandle cohomology group of $P_3$. 
 \begin{theorem}

If $\sigma$ is not an involution, then $P_n^\sigma$ has no good involutions. For a non-trivial involution $\sigma$, an involution $\rho$ on $P_n^\sigma$ is good if and only if $\rho(0)=0$ and as a permutation on the positive integers $\rho\in C_{S_n}(\sigma)$.

\end{theorem}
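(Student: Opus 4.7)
The plan is to analyze the two defining conditions of a good involution, namely $\rho(x*y)=\rho(x)*y$ and $x*\rho(y)=x\,\bar*\,y$, by a case split on whether each of $x$ and $y$ is $0$ or positive. The operation on $P_n^\sigma$ is especially simple: $x*y=x$ when $y\neq 0$, $x*0=\sigma(x)$ for $x\neq 0$, and $0*y=0$; correspondingly $x\,\bar*\,y=x$ when $y\neq 0$ and $x\,\bar*\,0=\sigma^{-1}(x)$ for $x\neq 0$. These rules collapse most of the case entries in both axioms to tautologies, leaving only a handful of genuine constraints on $\rho$.

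The decisive step is the substitution $y=0$, $x$ positive into the second axiom. The right-hand side is $\sigma^{-1}(x)$, while the left-hand side equals $\sigma(x)$ if $\rho(0)=0$ and equals $x$ if $\rho(0)\neq 0$. Thus either $\sigma^2=id$ (with $\rho(0)=0$) or $\sigma=id$ (with $\rho(0)\neq 0$); in particular, if $\sigma$ is not an involution no good $\rho$ exists, which is the first claim. Assuming $\sigma$ is a nontrivial involution, we have $\rho(0)=0$, and substituting $y$ positive and $x$ positive into the same axiom gives $x*\rho(y)=x$; if $\rho(y)$ were $0$ for some positive $y$, this would force $\sigma(x)=x$ on all positives, contradicting $\sigma\neq id$. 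So $\rho$ restricts to a permutation of $\{1,\dots,n\}$. Turning to the first axiom with $y=0$ and $x$ positive, we get $\rho(\sigma(x))=\rho(x)*0=\sigma(\rho(x))$, which is exactly the condition $\rho|_{S_n}\in C_{S_n}(\sigma)$; every other case is immediate either because $y\neq 0$ makes $*$ act trivially or because $x=0$ is absorbing.

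For the converse direction, assuming $\sigma$ is a nontrivial involution, $\rho(0)=0$, and $\rho$ commutes with $\sigma$ on the positive integers, one checks both axioms by running through the same case split: axiom 2 at $y=0$ uses $\sigma^{-1}=\sigma$, and axiom 1 at $y=0$, $x$ positive uses the commutation relation; all other cases are trivial. The main obstacle is really only bookkeeping, since the conditions have already been pinpointed. The conceptual crux is the observation in the second paragraph that the single case $y=0$, $x$ positive of axiom 2 simultaneously delivers the nonexistence statement, the requirement $\rho(0)=0$, and the necessity that $\sigma$ square to the identity.
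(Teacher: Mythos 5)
Your proposal is correct and follows essentially the same route as the paper: the key computation is the second good-involution axiom at $y=0$ with $x$ positive, which forces $\rho(0)=0$ and $\sigma^2=id$ (or else $\sigma=id$), the first axiom at $y=0$ then gives $\rho\in C_{S_n}(\sigma)$, and the converse is a routine case check. The only cosmetic difference is that you re-derive the fact that $\rho$ permutes the positives from the axioms, whereas it already follows from $\rho$ being a bijection fixing $0$.
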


\begin{proof}

Suppose that $\rho$ is a good involution on $P_n^\sigma$ with $\rho(0)\neq 0$. Then, $x=x*\rho(0)=x \ \bar* \ 0=\sigma^{-1}(x)$ for all $x\neq 0$ implying that $\sigma=id$.  Therefore, $\rho(0)=0$ and $\sigma(x)=x*0=x*\rho(0)=x\ \bar* \ 0=\sigma^{-1}(x)$ for all $x\neq 0$, i.e. $\sigma$ is an involution. Since $\rho(\sigma(x))=\rho(x*0)=\rho(x)*0=\sigma(\rho(x))$ for all $x\neq 0$, $\rho$ is in $\sigma$'s centralizer. 

Now suppose that $\rho$ is an involution on $P_n^\sigma$ fixing 0 and restricting to a permutation in the centralizer of $\sigma$.  A case study shows that $\rho$ is good, see Table \ref{tab:involution}.

\begin{table}[ht]
\centering
\begin{tabular}{|c|c| c| c| c|}
\hline
$x$ & $y$  & $\rho(x*y)=\rho(x)*y$ & $x*\rho(y)=x \ \bar *\  y$  \\\hline \hline 
+ & +  & $\rho(x)=\rho(x)$ & $x=x$  \\\hline  
+ & 0 & $\rho(\sigma(x))=\sigma(\rho(x))$ & $\sigma(x)=\sigma^{-1}(x)$  \\\hline  
0 & +  & $0=0$ & $0=0$  \\\hline  
0 & 0  & $0=0$ & $0=0$  \\\hline  
\end{tabular}
\vspace{4mm}
\caption{Verification that $\rho$ is good.}
\label{tab:involution}
\end{table}

\end{proof}

\begin{theorem}
\label{thm:symgroup}

Let $\rho:P_3\to P_3$ be the transposition $(12)$. Then, \[ H^2_{Q,\rho}(P_3;\mathbb{Z}_2 )=\mathbb{Z}_2.\]

\end{theorem}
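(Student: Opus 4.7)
The plan is to follow the template of Theorem~\ref{thm:cohom2} applied to $P_3 = P_2^{(12)}$ and then incorporate the extra constraints coming from the good involution $\rho = (12)$. I begin by writing a generic 2-cochain $\phi \in C^2(P_3; \mathbb{Z}_2)$ as $\phi = \sum_{(i,j)} C_{(i,j)} \rchi_{(i,j)}$ and applying the quandle 2-cocycle conditions of Lemma~\ref{lem:cocycle}. Specializing the case analysis of Theorem~\ref{thm:cohom2} to $n = 2$, $\sigma = (12)$ (with $k = 1$ cycle) forces $C_{(i,i)} = 0$, $C_{(1,2)} = C_{(2,1)} = 0$, and $C_{(0,1)} = C_{(0,2)}$, so $Z^2_Q(P_3;\mathbb{Z}_2)$ has three free parameters $C_{(0,1)}$, $C_{(1,0)}$, $C_{(2,0)}$.

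Next I impose the symmetric cocycle conditions coming from $D^\rho_2$: for every $(x,y) \in P_3 \times P_3$ the cochain $\phi$ must satisfy $\phi(x,y) + \phi(\rho(x),y) = 0$ and $\phi(x,y) + \phi(x*y,\rho(y)) = 0$. A systematic case check, using $\rho = (12)$ and the product rule for $P_3$, shows that the only new identification (beyond those already forced by the quandle cocycle relations, or automatic in characteristic $2$) is $C_{(1,0)} = C_{(2,0)}$, leaving $Z^2_{Q,\rho}(P_3;\mathbb{Z}_2)$ with two free parameters. The symmetric 2-coboundary group $B^2_{Q,\rho}(P_3;\mathbb{Z}_2)$ is then computed by identifying the valid symmetric 1-cochains---those $\psi$ with $\psi(x) + \psi(\rho(x)) = 0$, which over $\mathbb{Z}_2$ collapses to $\psi(1) = \psi(2)$ with $\psi(0)$ free, spanned by $\rchi_0$ and $\rchi_1 + \rchi_2$---and applying $\delta$ to each. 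Taking $Z^2_{Q,\rho}/B^2_{Q,\rho}$ yields $H^2_{Q,\rho}(P_3; \mathbb{Z}_2) \cong \mathbb{Z}_2$.

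The main technical obstacle is the bookkeeping: because the quandle cocycle conditions on $27$ triples and the two families of $D^\rho_2$ relations on $9$ pairs overlap substantially, one must pick out carefully which combinations contribute genuine new constraints and which merely repeat earlier ones. A secondary subtlety is that the space of valid symmetric 1-cochains over $\mathbb{Z}_2$ is larger than one might first expect, since $2\psi(0) = 0$ automatically in characteristic $2$, so the coboundary computation must carefully track which of $\delta \rchi_1$, $\delta \rchi_2$, $\delta(\rchi_1 + \rchi_2)$, and $\delta \rchi_0$ actually land in $B^2_{Q,\rho}$ rather than merely in $B^2_Q$.
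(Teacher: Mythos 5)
Your setup is sound and matches the paper's through the computation of the cocycle group: $Z^2_{Q,\rho}(P_3;\mathbb{Z}_2)$ is indeed cut out by $C_{(0,1)}=C_{(0,2)}$ and $C_{(1,0)}=C_{(2,0)}$ (all other coefficients zero), hence is $(\mathbb{Z}_2)^2$. The gap is in the last step. You correctly identify the admissible symmetric $1$-cochains as the span of $\rchi_0$ and $\rchi_1+\rchi_2$, but if you actually apply $\delta$ to these you get zero in both cases: $\delta\rchi_0=0$, and $\delta(\rchi_1+\rchi_2)=(\rchi_{(1,0)}-\rchi_{(2,0)})+(\rchi_{(2,0)}-\rchi_{(1,0)})=0$. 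So the coboundary group you construct is trivial, and the quotient $Z^2_{Q,\rho}/B^2_{Q,\rho}$ as you have assembled it is $(\mathbb{Z}_2)^2$, not $\mathbb{Z}_2$. The conclusion does not follow from the data in your argument; the class $\bigl[\rchi_{(1,0)}+\rchi_{(2,0)}\bigr]$ survives your quotient, and your own description of $C^1_{Q,\rho}$ (which forces $\psi(1)=\psi(2)$) shows it cannot be written as a symmetric coboundary.

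The paper's proof takes a different route around exactly this point: it first reduces an arbitrary cocycle modulo the \emph{quandle} coboundary $\delta\rchi_1=\rchi_{(1,0)}-\rchi_{(2,0)}$, obtaining the normal form $C_{(0,1)}(\rchi_{(0,1)}+\rchi_{(0,2)})+(C_{(1,0)}+C_{(2,0)})\rchi_{(1,0)}$, and only then imposes the relation $C_{(1,0)}+C_{(2,0)}=0$ coming from $(1,0)+(2,0)\in D_2^\rho$, which kills the second term. Note that $\rchi_1$ itself is not a symmetric $1$-cochain ($\rchi_1(1)+\rchi_1(2)=1$), so this reduction happens outside the symmetric complex; reconciling it with a direct $Z^2_{Q,\rho}/B^2_{Q,\rho}$ computation is precisely the point your proof must address and currently does not. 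To repair your argument you would need to supply a reason why $\rchi_{(1,0)}+\rchi_{(2,0)}$ represents the trivial class in $H^2_{Q,\rho}(P_3;\mathbb{Z}_2)$, or else revisit the bookkeeping that leads you to assert the quotient is $\mathbb{Z}_2$.
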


\begin{proof}

Theorem \ref{thm:cohom2} shows that any cocycle $\sum_{i,j\in P_n^\sigma} C_{(i,j)}\rchi_{(i,j)} \in Z^2_{Q,\rho}(P_3;\mathbb{Z}_2 )$ is cohomologous to 

\[ C_{(0,1)}(\rchi_{(0,1)}+\rchi_{(0,2)})+(C_{(1,0)}+C_{(2,0)})\rchi_{(1,0)} \]

\noindent in $H^2_{Q,\rho}(P_3;\mathbb{Z}_2 )$. Since $2(0,1), (1,0)+(2,0)\in D_2^\rho$, the coefficients satisfy $2C_{(0,1)}=C_{(1,0)}+C_{(2,0)}=0$. Thus,  \[ \rchi_{(0,1)}+\rchi_{(0,2)} \] represents a generator of the order 2 group $H^2_{Q,\rho}(P_3;\mathbb{Z}_2 )$.

\end{proof}

\section{The quiver and cocycle invariant of links using $P_n^\sigma$}
\label{sec:quiver}

In this section, the cocycle $\theta\in Z_Q^2(P_n^{n-cycle};\langle t^k: k\in \mathbb{Z} \rangle )$ given by \[\theta=t^{\LARGE\mathlarger\sum_{i=1}^n \rchi_{(0,i)}}\]  will be used to compute the cocycle invariant $\Phi_\theta$. This was shown to be a cocycle in Theorem \ref{thm:cohom2}. The cocycle takes the value $t$ on any tuple of the form $(0,a)$ where $a$ is positive. Quandle quivers $\mathcal{Q}_{P_n^\sigma}^S$ will be computed using an arbitrary permutation $\sigma$ and $S\subseteq End(P_n^\sigma)$.

\begin{definition}
Let $K_1$ and $K_2$ be components of an oriented link. Figure \ref{fig:crossing} describes the 4 types of crossings between $K_1$ and $K_2$ where the horizontal arcs belong to one component and the vertical arcs belong to the other. Let $n_1$, $n_2$, $n_3$, and $n_4$ denote the number of crossings of type $(i)$, $(ii)$, $(iii)$, and $(iv)$. The pairwise linking number between $K_1$ and $K_2$ is defined by \[ lk(K_1,K_2)=\frac{n_1+n_2-n_3-n_4}{2}=n_1-n_4=n_2-n_3.\]

\end{definition}

\begin{figure}[ht]

\begin{overpic}[unit=.434mm,scale=.7]{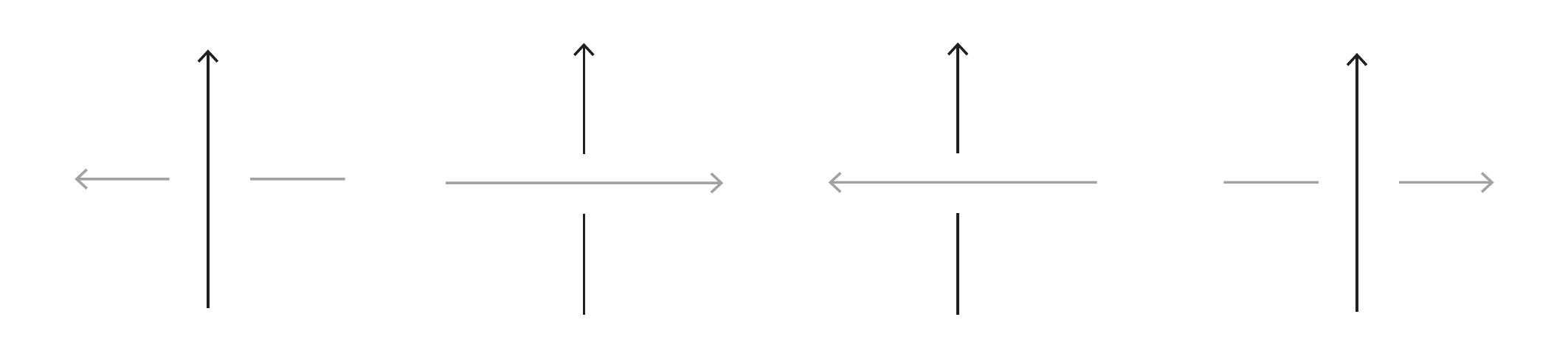}\put(42,-10){$(i)$}\put(124,-10){$(ii)$}\put(205,-10){$(iii)$}\put(295,-10){$(iv)$}
\end{overpic}

\vspace{4mm}
\caption{Crossing types between 2 components.}
\label{fig:crossing}
\end{figure}

\begin{theorem}

The quiver $\mathcal{Q}_{P_n^\sigma}^S(L)$ of a 2-component link $L=K_1\cup K_2$ is determined by $lk(K_1,K_2)$.

\end{theorem}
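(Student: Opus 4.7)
The plan is to encode each $P_n^\sigma$-coloring of $L$ by the pair $(c(b_1),c(b_2))$ of colors at chosen base arcs $b_i \subset K_i$, and to show that both the set of valid pairs and the action of $S \subseteq End(P_n^\sigma)$ on it depend only on $lk(K_1,K_2)$. The key fact from the definition of $P_n^\sigma$ is that $0 * y = 0$ for all $y$, while $x * y \in \{x, \sigma(x)\}$ is positive whenever $x > 0$. Since adjacent arcs of a single component are related by a crossing relation $c' = c*y$, the color-type (``$0$'' versus ``positive'') is constant on each component. Hence every coloring has one of four types $PP$, $0P$, $P0$, or $00$, according to whether each $K_i$ is colored entirely by $0$ or entirely by positive integers.

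Next, I will compute the holonomy around each component to get the validity constraints. At a self-crossing of $K_i$ with $K_i$ positively colored, the over-arc is positive, so the coloring condition forces $c(\mathrm{out}) = c(\mathrm{in})$. At a mixed crossing where the other component (colored $0$) passes over $K_i$, one has $c(\mathrm{out}) = \sigma^{\epsilon}(c(\mathrm{in}))$ with $\epsilon = \pm 1$ the sign of the crossing. Tracing $K_i$ once therefore multiplies $c(b_i)$ by $\sigma^{m_i}$, where $m_i$ is the signed count of $K_i$-under-$K_j$ crossings; the identity $m_i = lk(K_1,K_2)$ (from the formulas $lk = n_1 - n_4 = n_2 - n_3$ in the definition) gives the constraint $\sigma^{lk}(c(b_i)) = c(b_i)$ whenever the opposite component is $0$-colored. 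Propagating along each component then recovers $c$ uniquely from $(c(b_1),c(b_2))$, so the vertex set of $\mathcal{Q}_{P_n^\sigma}^S(L)$ is in bijection with
\[
V(lk) = \bigl\{(a,b):a,b\in\{1,\dots,n\}\bigr\}\cup\bigl\{(0,b):b\in\mathrm{Fix}(\sigma^{lk})\bigr\}\cup\bigl\{(a,0):a\in\mathrm{Fix}(\sigma^{lk})\bigr\}\cup\{(0,0)\},
\]
which depends only on $lk(K_1,K_2)$.

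Finally, for any $f \in End(P_n^\sigma)$ the composite $f \circ c$ is automatically a $P_n^\sigma$-coloring, and its pair-data is $(f(c(b_1)), f(c(b_2)))$. Hence the induced $S$-action on $V(lk)$ is simply $(x,y) \mapsto (f(x), f(y))$, intrinsic to $P_n^\sigma$ and $lk$. Given another link $L'$ with $lk(K_1',K_2') = lk(K_1,K_2)$, matching colorings of $L$ and $L'$ via their common pair-data produces an $S$-equivariant vertex bijection, hence a quiver isomorphism $\mathcal{Q}_{P_n^\sigma}^S(L) \cong \mathcal{Q}_{P_n^\sigma}^S(L')$. The main subtlety is the holonomy computation: verifying that tracing around $K_i$ in the $0P$ or $P0$ type yields exactly $\sigma^{lk}$ under the sign convention used to define $lk$ in the excerpt, and that propagation from the base arc is path-independent once the holonomy constraint is satisfied. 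Once this identification is in place, the rest is formal bookkeeping.
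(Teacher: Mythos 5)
Your proposal is correct and follows essentially the same route as the paper: classify colorings by whether each component is $0$-colored or positively colored, derive the constraint that a positive color opposite a $0$-colored component must be fixed by $\sigma^{lk(K_1,K_2)}$ (equivalently, its $\sigma$-orbit length divides the linking number), parametrize colorings by the pair of base-arc colors, and observe that the $S$-action is the componentwise action on these pairs. The only cosmetic difference is that you phrase the constraint via $\mathrm{Fix}(\sigma^{lk})$ where the paper says $|orb_\sigma(k)|$ divides $lk(K_1,K_2)$, which is the same condition.
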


\begin{proof}

 Let $a_i$ be some arc of $K_i$. The trivial quandle embeds into $P_n^\sigma$ and coloring $L$ using this subquandle, the positive integers of $P_n^\sigma$, simplifies to independently selecting the colors of $a_1$ and $a_2$. 

If $a_1$ is colored 0 then all arcs of $K_1$ are assigned 0 since $0*x=0$ for any $x\in P_n^\sigma$. If the arc $a_2$ is also colored 0, then the link is trivially colored. If $a_2$ is colored with a positive integer $k$, then $k$'s cycle length $|orb_\sigma(k)|$ must divide the linking number $lk(K_1,K_2)$. To see this, color the arcs of $K_2$ following an orientation and starting with $a_2$. When the arc meets a crossing as an under arc the other under arc must be colored $k$ if the over strand is from $K_2$, $\sigma(k)$ if the over strand is from $K_1$ and the crossing is positive, and $\sigma^{-1}(k)$ if the over strand is from $K_1$ and the crossing is negative. Therefore, $\sigma^{lk(K_1,K_2)}(k)=k$ and $|orb_\sigma(k)|$ divides $lk(K_1,K_2)$.

Let $T$ be the sum of $\sigma$'s cycle lengths which divide $lk(K_1,K_2)$. When $a_1$ is colored 0 there are $T$ choices of colors for $a_2$ that uniquely extend to non-trivial colorings of $L$. Likewise, there are $T$ choices of colors for $a_1$ when $a_2$ is colored 0. The $P_n^\sigma$-colorings of $L$ are parametrized by the colorings $(x_1,x_2)$ of the arcs $a_1$ and $a_2$. The ordered pair $(x_1,x_2)$ represents arc colors of $a_1$ and $a_2$ that uniquely extend to a $P_n^\sigma$-coloring of $L$ if $x_1,x_2\neq0$, $x_1=0$ and $|orb_\sigma(x_2)|$ divides $lk(K_1,K_2)$, $x_2=0$ and $|orb_\sigma(x_1)|$ divides $lk(K_1,K_2)$, or $x_1=x_2=0$. The action of $End(P_n^\sigma)$ on $Col_{P_n^\sigma}(L)$ is determined by its action on the ordered pairs $(x_1,x_2)$ representing colorings. Therefore, the linking number  $lk(K_1,K_2)$ determines the quiver $\mathcal{Q}_{P_n^\sigma}^S(K_1\cup K_2)$.

\end{proof}

  \begin{corollary}

The quandle cocycle invariant $\Phi_\theta$ of a 2-component link $L=K_1\cup K_2$ is determined by $lk(K_1,K_2)$.
\label{cor}
\end{corollary}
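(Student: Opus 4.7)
The plan is to compute $\Phi_\theta(L)$ directly by integrating the $\theta$-weight over the classification of $P_n^{n\text{-cycle}}$-colorings produced in the preceding theorem. Fix distinguished arcs $a_1\subset K_1$ and $a_2\subset K_2$, so that each coloring corresponds to an ordered pair $(x_1,x_2)$ of base-arc colors subject to the divisibility constraint the theorem exhibits when exactly one of the two entries is $0$.

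The key observation driving the computation is that $\theta(x,y)=t$ precisely when $x=0$ and $y$ is positive, and $\theta(x,y)=1$ otherwise. Hence a crossing of an $X$-coloring contributes a nontrivial factor to $\prod_{c\in C}\theta_c$ only when its under-arcs are colored $0$ while its over-arc is colored by a positive integer.

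I would then split the sum defining $\Phi_\theta(L)$ into four cases. \textbf{(a)} If $x_1=x_2=0$, every arc is colored $0$, and the product of weights is $1$. \textbf{(b)} If $x_1$ and $x_2$ are both positive, then each component is monochromatically colored by a positive integer, every under-arc is positive, so each of the $n^2$ such colorings contributes $1$. \textbf{(c)} If $x_1=0$ and $x_2=k$ is positive, the preceding theorem forces $n\mid lk(K_1,K_2)$ for the coloring to exist; every arc of $K_1$ carries color $0$ and every arc of $K_2$ carries a positive color (varying by powers of $\sigma$), so the only contributing crossings are those where $K_2$ passes over $K_1$. The signed count of those crossings is $lk(K_1,K_2)$, giving product $t^{lk(K_1,K_2)}$; with $n$ choices of $k$, the total contribution is $n\,t^{lk(K_1,K_2)}$. \textbf{(d)} The case $x_1=k$ positive, $x_2=0$ is symmetric and yields another $n\,t^{lk(K_1,K_2)}$.

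Summing,
\[
\Phi_\theta(L)=
\begin{cases}
1+n^2+2n\,t^{lk(K_1,K_2)} & \text{if } n\mid lk(K_1,K_2),\\
1+n^2 & \text{otherwise},
\end{cases}
\]
which depends only on $lk(K_1,K_2)$. The main obstacle is purely bookkeeping: correctly identifying, in each of cases (c) and (d), which crossing type carries the $(0,\text{positive})$ pattern and then recognizing its signed count as $lk(K_1,K_2)$ via the equalities $lk(K_1,K_2)=n_1-n_4=n_2-n_3$ from the definition preceding the theorem.
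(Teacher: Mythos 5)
Your proposal is correct and follows essentially the same route as the paper: parametrize colorings by the base-arc pair $(x_1,x_2)$, observe that only crossings with under-arc colored $0$ and over-arc positive contribute, and identify the signed count of such crossings with $lk(K_1,K_2)$ via $n_1-n_4$. Your final formula $1+n^2+2n\,t^{lk(K_1,K_2)}$ (when $n\mid lk$) agrees with the paper's, and your case (b) justification that both components stay positively colored is a welcome explicit detail.
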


\begin{proof}

Suppose that $(x_1,x_2)$ represents a $P_n^\sigma$-coloring of $L$ where $\sigma$ is an $n$-cycle. The $\theta$-weight of $L$ with respect to $(x_1,x_2)$ is 0 when $x_1=x_2=0$ or $x_1,x_2\neq0$. If $x_2\neq x_1=0$, then the corresponding $\theta$-weight is $t^{lk(K_1,K_2)}$. To see this, let $K_1$ represent the horizontal component and let $K_2$ represent the vertical component in Figure \ref{fig:crossing}. Then the crossing types $(i)$, $(ii)$, $(iii)$, and $(iv)$ have  $\theta$-weights $t$, 0, 0, and $t^{-1}$. Since the number of $(i)$ crossings minus the number of $(iv)$ crossings equals the linking number, the corresponding $\theta$-weight of the coloring is $t^{lk(K_1,K_2)}$. Likewise, the $\theta$-weight when $x_1\neq x_2=0$ is $t^{lk(K_1,K_2)}$. This also holds if $K_1$ represented the vertical component and $K_2$ represented the horizontal component in Figure \ref{fig:crossing}.

 If $n$ divides the linking number, then there are $2n$ colorings with a $\theta$-weight of $t^{lk(K_1,K_2)}$, \[(0,1), (0,2), \dots, (0,n), (1,0), (2,0), \dots, (n,0).\] In this case, $\Phi_\theta(L)=1+n^m+2nt^{lk(K_1,K_2)}$. When $n$ does not divide the linking number, $\Phi_\theta(L)=1+n^m$.

\end{proof}

  \begin{definition}
  
  The {\it linking graph} of a link $L$ is a complete graph with vertices representing $L$'s components and edge weights representing the linking numbers between components.    
  \end{definition}
  
\begin{lemma}  Every complete, simple graph with integral weights represents the linking graph of some link. 
\end{lemma}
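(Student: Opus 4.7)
The plan is to construct such a link explicitly, starting from the $n$-component unlink and modifying it by local clasp insertions whose effect on pairwise linking numbers is transparent.

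Fix the complete graph $G$ on vertex set $\{1,\dots,n\}$ with integer edge weight $\ell_{ij}$ on the edge $\{i,j\}$. First I would place $n$ pairwise unlinked round unknots $K_1,\dots,K_n$ in disjoint balls of $\mathbb{R}^3$, realizing the all-zero linking graph. For each unordered pair $\{i,j\}$ I would also earmark a small operating ball $B_{ij}$, chosen so that the $B_{ij}$'s are pairwise disjoint and so that each $B_{ij}$ meets the current link in exactly one unknotted arc of $K_i$ and one of $K_j$.

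Inside each $B_{ij}$ I would then insert $|\ell_{ij}|$ disjoint clasps between the two arcs. A \emph{clasp} is the standard local two-string tangle modeled on the Hopf link, containing exactly two crossings between $K_i$ and $K_j$, both of the same sign. Using the formula $lk(K_i,K_j)=\tfrac12\sum\epsilon_c$ summed over crossings between $K_i$ and $K_j$, each positive clasp contributes $+1$ and each negative clasp contributes $-1$ to $lk(K_i,K_j)$. I would make every clasp in $B_{ij}$ positive if $\ell_{ij}>0$ and negative if $\ell_{ij}<0$, producing the desired value $lk(K_i,K_j)=\ell_{ij}$.

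The crucial observation that makes the procedure work is that $B_{ij}$ meets no component other than $K_i$ and $K_j$, so the clasp insertions in $B_{ij}$ create no new crossings between any other pair of components. Hence the pairwise linking numbers can be adjusted independently, and performing the insertion on all $\binom{n}{2}$ pairs yields a link $L$ whose linking graph is exactly $G$. The only real work is the bookkeeping needed to make the $B_{ij}$'s disjoint, which is routine: place the $K_i$'s widely apart and send a thin finger of each pair into a dedicated region $B_{ij}$ disjoint from all the others.
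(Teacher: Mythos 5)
Your construction is correct, and it takes a genuinely different (though equally classical) route from the paper. You start from the $n$-component unlink and realize each weight $\ell_{ij}$ by inserting $|\ell_{ij}|$ signed clasps in a dedicated ball $B_{ij}$ meeting only $K_i$ and $K_j$; since $lk(K_i,K_j)$ is half the signed count of crossings between the two components and each clasp contributes exactly $\pm 1$, the $\binom{n}{2}$ adjustments are independent and the target linking graph is achieved. The paper instead uses the planar drawing of the graph itself as the template: it places the vertices on a circle, draws the edges as chords, removes disk neighborhoods of the vertices to obtain a singular diagram of unknotted circles joined by edges, and then replaces each edge by a twist tangle whose number and sign of twists encode the weight, assigning arbitrary crossing information where chords intersect. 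The two proofs buy slightly different things. Yours makes the independence of the pairwise linking numbers completely transparent (the operating balls are disjoint and each meets only two components), at the cost of some routing bookkeeping, which you correctly flag as routine; note also that any crossings created while steering the fingers into position occur during an isotopy of the unlink and hence cannot change the (vanishing) linking numbers before the clasps are inserted. The paper's construction produces a diagram that visibly ``is'' the weighted graph, which is aesthetically aligned with the linking-graph formalism, but it must implicitly rely on the fact that the crossings arising from chord intersections do not contribute to any pairwise linking number. Either argument suffices for the lemma.
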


\begin{proof}

Let $G$ be a complete graph on $n$ vertices with integral weights. Take $n$ points equidistant along the unit circle. The graph $G$ is represented by connecting each pair of points with  a line segment. Assume after a small perturbation that any two arcs intersect transversally in double points along their interiors or not at all. Give these double points arbitrary crossing information. Take small disk neighborhoods of the vertices missing the crossings then remove the interiors of the disks. This leaves a 3-valent, singular link diagram seen as edges connecting unknotted circles,  Figure \ref{fig:singular}. Replace a small neighborhood of the edges with the tangle in Figure \ref{fig:twist} such that the number of twists equals the weight of the edge that it is replacing. Arbitrarily orient the components and replace any twist region with its mirror if the sign of its crossings does not equal the sign of the weight that it replaced. This gives an oriented link whose linking graph is $G$.

\end{proof}

\begin{figure}

\includegraphics[scale=.7]{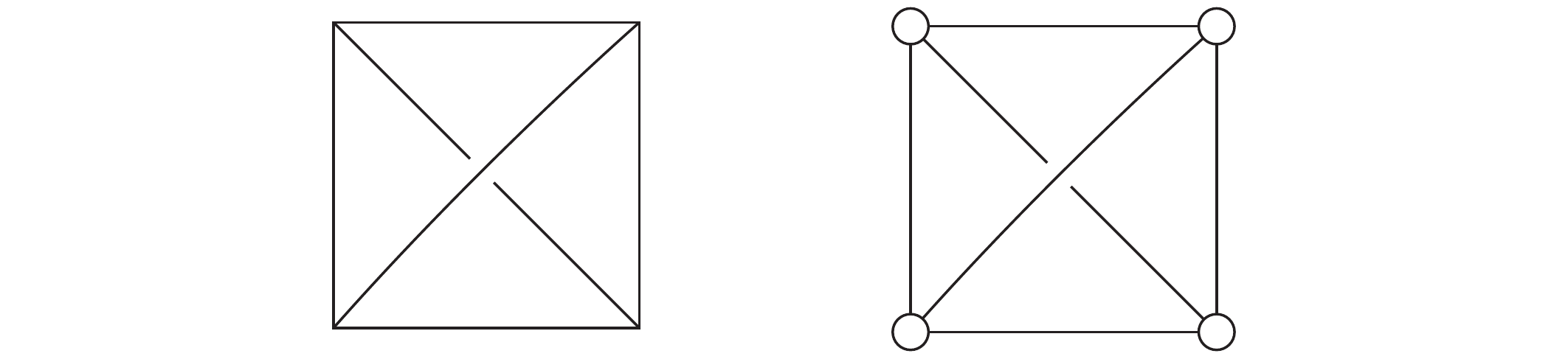}

\caption{Removing the interiors of disk neighborhoods.}
\label{fig:singular}
\end{figure}

\begin{figure}

\includegraphics[scale=.7]{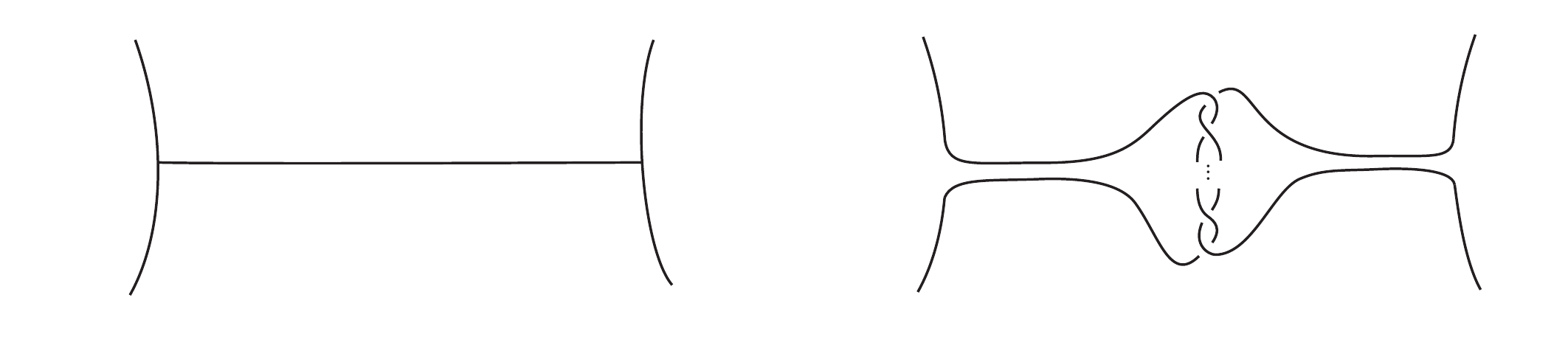}

\caption{Replacing an edge with a twist tangle.}
\label{fig:twist}
\end{figure}
  
  \begin{theorem}
  If two links have isomorphic linking graphs, then they have isomorphic quivers $\mathcal{Q}_{P_n^\sigma}^S$ and equal cocycle $\Phi_\theta$ invariants. 
  \end{theorem}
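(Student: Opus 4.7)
The plan is to generalize the $2$-component analysis directly: parametrize $P_n^\sigma$-colorings of a $k$-component link $L = K_1 \cup \cdots \cup K_k$ by a tuple $(x_1, \ldots, x_k)$ of colors on chosen representative arcs, and show that both the parametrization and the $End(P_n^\sigma)$-action on it depend only on the linking graph.

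First I would observe that since $x * y = x$ whenever $y \neq 0$, the only nontrivial color propagation along a component $K_i$ occurs where $K_i$ passes under an over-arc from a $0$-colored component, in which case the color transforms by $\sigma^{\pm 1}$ according to the sign of the crossing. Hence each component is either uniformly $0$-colored or positively colored with arc colors in a single $\sigma$-orbit. Let $Z \subseteq \{1, \ldots, k\}$ index the $0$-colored components. For $i \notin Z$, consistency of the coloring on $K_i$ amounts to $\sigma^{L_i}(x_i) = x_i$, where $L_i = \sum_{j \in Z} lk(K_i, K_j)$, since the signed count of $K_i$-under-$K_j$-over crossings equals $lk(K_i, K_j)$ and only $K_j \in Z$ produce nontrivial contributions. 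Self-crossings contribute nothing because the over-arc has the same $0$ or positive type as the component itself. Equivalently, $|orb_\sigma(x_i)|$ must divide $L_i$, a condition determined entirely by the linking graph. A weight-preserving isomorphism of linking graphs $\varphi$ therefore induces a bijection between the colorings of $L$ and $L'$ by matching representative arcs through $\varphi$ and transporting $\sigma$-orbit choices; since $End(P_n^\sigma)$ acts componentwise on representatives, this bijection extends to an isomorphism of quivers $\mathcal{Q}_{P_n^\sigma}^S$.

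For $\Phi_\theta$ (in which $\sigma$ is an $n$-cycle), observe that $\theta(x, y)$ is nontrivial only when $x = 0$ and $y \neq 0$, so only crossings whose under-arc is $0$-colored and over-arc is positively colored contribute. For a coloring with $0$-set $Z$, these are precisely the $K_i$-under-$K_j$-over crossings with $i \in Z$ and $j \notin Z$, whose signed total is $\sum_{i \in Z, \, j \notin Z} lk(K_i, K_j)$. Thus each coloring contributes weight $t^{\sum_{i \in Z, j \notin Z} lk(K_i, K_j)}$, and summing over the colorings (whose count is determined by the $L_i$ divisibility data and the orbit structure of $\sigma$) yields an expression in $t$ depending only on the linking graph. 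The main obstacle I expect is the careful bookkeeping needed to confirm that the $\sigma^{\pm 1}$ contributions accumulated around a traversal of $K_i$ sum exactly to $\sigma^{L_i}$, and to verify that the crossing types contributing to $\theta$ assemble into the claimed linking-number sum; both generalize the $2$-component case by identical sign accounting.
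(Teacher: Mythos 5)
Your proposal is correct and follows essentially the same route as the paper: parametrize colorings by the tuple of colors on representative arcs, observe that both the set of valid tuples and the componentwise $End(P_n^\sigma)$-action on it are determined by the linking graph, and compute the $\theta$-weight of each coloring as $t$ raised to a sum of linking numbers between $0$-colored and positively colored components. One remark: your consistency condition, that $|orb_\sigma(x_i)|$ divide the total $\sum_{j\in Z} lk(K_i,K_j)$ over the $0$-colored components, is the correct monodromy condition, whereas the paper asserts the stronger per-pair condition that $|orb_\sigma(x_j)|$ divide each $lk(K_i,K_j)$ separately; since either condition is determined by the linking graph the theorem is unaffected, but your version is the accurate description of the coloring set when more than one component is colored $0$.
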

  
  \begin{proof}
    
 Let $L=K_1\cup \cdots\cup K_m $ be a diagram of an $m$-component link. Let $a_i$ be some arc of $K_i$. The $P_n^\sigma$-colorings of $L$ can be parametrized by the $m$-tuple of colors $(x_1,\dots,x_m)$ assigned to $a_1,\dots,a_m $.  An $m$-tuple of colors $(x_1,\dots, x_m)\in (P_n^\sigma)^m$ represents the colors of the arcs $(a_1,\dots,a_m)$ in a $P_n^\sigma$-colorings of $L$ if and only if whenever $x_i=0$, $|orb_\sigma(x_j)|$ divides $lk(K_i,K_j)$ for all $x_{j}\neq0$. The action of $End(P_n^\sigma)$ on $Col_{P_n^\sigma}(L)$ is determined by its action of the set of $m$-tuples  $(x_1,\dots,x_m)$ representing the colorings. Any other link with an isomorphic linking graph will produce the same set of $m$-tuples  $(x_1,\dots,x_m)$ up to a fixed permutation of the entries. Therefore, their quivers are equal. 
 
If $x_i\neq x_j=0$, then the $\theta$-weight contributed by the crossings of $K_i$ and $K_j$ in the coloring $(x_1,\dots, x_m)$ is $t^{lk(K_i,K_j)}$ as justified in Corollary \ref{cor}.  The $\theta$-weight of a coloring $(x_1,\dots,x_m)$ is
 
 \[ \sum_{i<j \ \land \ (x_i\neq x_j=0 \ \lor \ 0=x_i\neq x_j)  } t^{lk(K_i,K_j)}. \]

\noindent Since the $\theta$-weight of a coloring is determined by its representative $m$-tuple $(x_1,\dots,x_m)$ and $L$'s linking numbers, two links with isomorphic linking graphs will have equal cocycle invariants.
  
  \end{proof}

    \begin{corollary}

If two links have spanning trees in their linking graphs such that no cycle length of $\sigma$ divides a weight in these trees, then their quivers $\mathcal{Q}_{P_n^\sigma}^S$ are isomorphic and their cocycle invariants $\Phi_\theta$ equal $1+n^m$.

\end{corollary}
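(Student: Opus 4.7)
The plan is to reduce the problem to the combinatorics of the previous theorem's parametrization of $P_n^\sigma$-colorings by $m$-tuples $(x_1,\dots,x_m)\in (P_n^\sigma)^m$ and then use the spanning-tree hypothesis to eliminate all ``mixed'' tuples. Recall that an $m$-tuple represents a valid coloring iff, whenever $x_i=0$ and $x_j\neq 0$, we have $|orb_\sigma(x_j)| \mid lk(K_i,K_j)$. So the first step is to argue that, under the hypothesis, the only representing tuples are the all-zero tuple $(0,\dots,0)$ and the tuples with every coordinate positive.

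The key observation is the following connectivity argument. Suppose $(x_1,\dots,x_m)$ represents a coloring with $Z=\{i:x_i=0\}$ and $P=\{i:x_i\neq 0\}$ both nonempty. A spanning tree $T$ of the linking graph is connected and contains every vertex, so some edge of $T$ joins a vertex $i\in Z$ to a vertex $j\in P$. Validity forces $|orb_\sigma(x_j)|$ to divide the weight $lk(K_i,K_j)$ of that tree edge, contradicting the assumption that no cycle length of $\sigma$ divides a weight in $T$. Hence $Z$ or $P$ must be empty, giving the desired dichotomy.

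With this in hand, the coloring set is exhausted by the trivial tuple together with the $n^m$ tuples $(x_1,\dots,x_m)\in\{1,\dots,n\}^m$, for a total of $1+n^m$ colorings, and this count depends only on $n$ and $m$, not on the link. Since the action of any $f\in S\subseteq End(P_n^\sigma)$ on a coloring is determined by its action on the representing tuple, and the set of valid tuples is the same for any link satisfying the hypothesis, the bijection matching tuples between two such links commutes with the $S$-action; this yields the quiver isomorphism exactly as in the 2-component theorem proved just above.

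Finally, the cocycle invariant falls out for free from the structure of $\theta=t^{\sum_{i=1}^n \rchi_{(0,i)}}$, which takes the value $t$ only on pairs of the form $(0,a)$ with $a$ positive and the value $1$ otherwise. For the all-zero coloring every crossing has color pair $(0,0)$, contributing $\theta$-weight $1$; for each of the $n^m$ all-positive colorings every crossing pair has both coordinates positive, also contributing $\theta$-weight $1$. Summing over all $1+n^m$ colorings gives $\Phi_\theta=1+n^m$. I expect the main (and essentially only) obstacle to be phrasing the connectivity step cleanly, since everything else is a direct specialization of the preceding theorem and corollary.
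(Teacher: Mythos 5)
Your proof is correct and follows essentially the same route as the paper: parametrize colorings by $m$-tuples, use the spanning tree to show a zero entry forces all entries to be zero, count $1+n^m$ colorings, and observe each has trivial $\theta$-weight. The only cosmetic difference is that you rule out mixed tuples via a cut edge of the tree between the zero and nonzero vertex sets, whereas the paper propagates zeros along a tree path; these are interchangeable uses of the tree's connectivity.
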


\begin{proof}

Let $L$ be an $m$-component link with $(x_1,\dots,x_m)\in(P_n^\sigma)^m$ parametrizing its $P_n^\sigma$-colorings. As before, such an $m$-tuple represents a coloring if and only if whenever $x_i=0$, $|orb_\sigma(x_j)|$ divides $lk(K_i,K_j)$ for all $x_j\neq 0$. 

 For any two components $K_i$ and $K_j$ of $L$ consider the path in the spanning tree connecting them. This gives a sequence of components whose consecutive linking numbers are not divisible by any cycle length of $\sigma$. As such, if $K_i$ is colored 0, i.e. $x_i=0$, then each consecutive component in the sequence must also be colored 0, namely $x_j=0$. Therefore, if any entry in $(x_1,\dots,x_m)$ is zero, then all entries are zero.
 
 In addition to the $n^m$ colorings using the positive integers there is a trivial coloring assigning all arcs 0. Each coloring has a trivial $\theta$-weight, so $\Phi_\theta(L)=1+n^m$. Since the action of $End(P_n^\sigma)$ on the tuples $(x_1,\dots,x_m)$ determines the quiver $\mathcal{Q}_{P_n^\sigma}^S(L)$, any other link satisfying the spanning tree condition will have an isomorphic quiver and equal cocycle invariant $\Phi_\theta$.

\end{proof}

  \begin{corollary}
Suppose that $\sigma$ has no fixed points. The pairwise linking numbers of a link with more than 2 components do not determine its quiver $\mathcal{Q}_{P_n^\sigma}^S$ or cocycle invariant $\Phi_\theta$. \label{cor:general}\end{corollary}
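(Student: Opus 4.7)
The plan is to produce a counterexample with $m=4$ components. By the preceding theorem the quiver $\mathcal{Q}_{P_n^\sigma}^S$ and cocycle invariant $\Phi_\theta$ depend only on the isomorphism class of the weighted linking graph, so it suffices to exhibit two weighted complete graphs on four vertices that share an edge-weight multiset but are non-isomorphic as weighted graphs, realize them as linking graphs via the lemma just proved, and then verify that the resulting invariants separate them.

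First I would point out that for $m=3$ the vertex automorphism group of $K_3$ acts transitively on its edges, so any edge-weight multiset fixes the weighted $K_3$ up to isomorphism; the counterexample must therefore use at least four components. On the vertex set $\{1,2,3,4\}$ I would compare $G_A$, which assigns weight $1$ to the disjoint edges $\{1,2\}, \{3,4\}$ and weight $2$ to the remaining four edges, with $G_B$, which assigns weight $1$ to the adjacent edges $\{1,2\}, \{1,3\}$ and weight $2$ elsewhere. Both have edge-weight multiset $\{1,1,2,2,2,2\}$, but counting weight-$1$ edges at each vertex shows $G_A\not\cong G_B$: in $G_A$ every vertex is incident to exactly one weight-$1$ edge, while in $G_B$ vertex $1$ is incident to two and vertex $4$ to none. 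The realization lemma above then produces oriented $4$-component links $L_A$, $L_B$ with these linking graphs.

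To separate the invariants, I would choose a fixed-point-free $\sigma$, for concreteness $\sigma=(1\,2)\in S_2$, so that both positive elements of $P_2^\sigma$ have orbit length $2$. By the preceding theorem a tuple $(x_1,\dots,x_4)$ parametrizes a valid coloring once a suitable zero set $Z=\{i:x_i=0\}$ is chosen (with positive values free on $Z^c$), admissibility of $Z$ being controlled by divisibility of the relevant linking-number data by $2$. A brief enumeration over subsets of $\{1,2,3,4\}$ shows that $G_A$ and $G_B$ admit different collections of admissible $Z$ — for $G_A$ the admissible sets are $\emptyset, \{1,2\}, \{3,4\}, \{1,2,3,4\}$, which pair up across the two disjoint odd edges, while for $G_B$ the asymmetric placement of the odd edges yields a different family including the singleton $\{4\}$ and its complement $\{1,2,3\}$. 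This produces different coloring counts, so $\mathcal{Q}_{P_2^\sigma}^S(L_A)\not\cong\mathcal{Q}_{P_2^\sigma}^S(L_B)$, and summing the $\theta$-weights from Corollary \ref{cor} over the admissible cases shows the cocycle invariants also differ.

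The main obstacle is conceptual rather than technical: one must choose a pair of graphs whose odd-weight edges — those carrying weights indivisible by an orbit length of $\sigma$ — occupy structurally different positions. The fixed-point-free hypothesis on $\sigma$ is used precisely to ensure every positive orbit has length $>1$, so that the divisibility criterion actually constrains colorings; once the disjoint-versus-adjacent configuration above is in hand, the remaining verification is routine enumeration.
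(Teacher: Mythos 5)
Your argument is correct, but it proves a different non-implication from the one the paper's own proof establishes, and the two are worth contrasting. You read the statement as: two links can share the \emph{multiset} of pairwise linking numbers yet have non-isomorphic quivers and unequal cocycle invariants. Your pair $G_A$ (the two odd-weight edges disjoint) versus $G_B$ (the two odd-weight edges adjacent) on four vertices does exactly this: with $\sigma=(1\,2)\in S_2$ the admissible zero-sets are $\emptyset,\{1,2\},\{3,4\},V$ for $G_A$ but $\emptyset,\{4\},\{1,2,3\},V$ for $G_B$, giving $25$ versus $27$ colorings (hence non-isomorphic quivers) and, with the product convention for $\theta$-weights, cocycle invariants $17+8t^{8}$ versus $17+10t^{6}$. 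This is arguably the only reading under which the corollary is consistent with the preceding theorem (isomorphic linking graphs already force isomorphic quivers), and your observation that such a counterexample requires $m\ge 4$ components is accurate. The paper's proof instead establishes the converse non-implication: it weights a spanning tree entirely by $1$, so that by the spanning-tree corollary every link obtained by varying the weight on one remaining non-tree edge admits only the all-zero and all-positive colorings, yielding isomorphic quivers and $\Phi_\theta=1+n^{m}$ for infinitely many links with pairwise different linking numbers --- i.e.\ the invariants do not recover the linking data. The paper's construction is shorter and already works for $m=3$; yours is the one that refutes ``the linking numbers determine the invariants'' as literally stated. The two facts are complementary, and both are true, so your proposal stands as a valid (and in the direction of the stated implication, the more faithful) proof.
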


  \begin{proof}
  
Take any spanning tree in a complete, simple graph and give all edges a weight of 1. Weight all but one of the remaining edges 1. There are infinitely many choices for the remaining weight that give non-isotopic links with isomorphic quivers and equal cocycle invariants.

  \end{proof}

        \bibliographystyle{amsplain}
\bibliography{proposal.bib}

\end{document}